\newcommand\bes{\begin{eqnarray}}
\newcommand\ees{\end{eqnarray}}
\newtheorem{theorem}{Theorem}[section]
\newtheorem{lemma}[theorem]{Lemma}
\newtheorem{corollary}[theorem]{Corollary}
\newtheorem{remark}[theorem]{Remark}
\newtheorem{proposition}[theorem]{Proposition}
\numberwithin{equation}{section}
\begin{document}

\title[Spatial Propagation in Nonlocal Dispersal Fisher-KPP Equations]{\textbf{Spatial Propagation in Nonlocal Dispersal Fisher-KPP Equations}}

\author[W.-B. Xu, W.-T. Li and S. Ruan]{Wen-Bing Xu$^{1,2}$, Wan-Tong Li$^{1,*}$ and Shigui Ruan$^{3}$}
\thanks{\hspace{-.6cm}
$^1$School of Mathematics and Statistics, Lanzhou University, Lanzhou, Gansu, 730000, People's Republic of China.
\\
$^2$Academy of Mathematics and Systems Science, Chinese Academy of Sciences, Beijing 100190, P. R. China.
\\
$^3$Department of Mathematics, University of Miami, Coral Gables, FL 33146, USA.
\\
$^*${\sf Corresponding author} (wtli@lzu.edu.cn)}

\date{\today}

\begin{abstract}
In this paper we focus on  three problems about the spreading speeds of nonlocal dispersal Fisher-KPP equations.  First, we study the signs of spreading speeds and find that they are determined by the asymmetry level of the nonlocal dispersal  and  $f'(0)$, where $f$ is the reaction function.  This indicates that asymmetric dispersal can influence the spatial dynamics in three aspects: it can determine the spatial propagation directions of solutions, influence the stability of equilibrium states, and affect the monotone property of solutions. Second, we give an improved proof of the spreading speed result by constructing new lower solutions and using the new ``forward-backward spreading'' method. Third, we establish the relationship between spreading speed and exponentially decaying initial data.  Our result demonstrates that when dispersal is symmetric, spreading speed decreases along with the increase of the exponentially decaying rate. In addition, the results on the signs of spreading speeds are applied to two  special cases where we present  more details of the influence of asymmetric dispersal.

\vspace{1em}
\textbf{Keywords}: Nonlocal dispersal Fisher-KPP equation, spatial propagation, spreading speed, asymmetric kernel.

\textbf{AMS Subject Classification}: 35C07, 35K57, 92D25
\end{abstract}

\maketitle

\section{Introduction}
\noindent

In this paper, we study spatial propagation of  the following nonlocal dispersal Fisher-KPP equation
\begin{equation}\label{1.1}
\left\{
\begin{aligned}
&u_t(t,x)= k*u(t,x)-u(t,x) + f(u(t,x)),~~t>0,~x\in \mathbb R,\\
&u(0,x)=u_0(x),~x\in\mathbb R,
\end{aligned}
\right.
\end{equation}
where $u_0\in C(\mathbb R)$, $f \in C^1([0,1])$ and satisfies the Fisher-KPP type condition:
\begin{itemize}
\item[(H)] $f$ is  monstable, namely $f(0)=f(1)=0$ and $f(u)>0$ for $u\in(0,1)$, $f'(0)>0$  and $f(u)\leqslant f'(0)u$ for $u\in (0,1)$.
\end{itemize}
The nonlocal dispersal, represented by the following integral operator
\[
k*u(t,x)-u(t,x)=\int_{\mathbb R}k(x-y)u(t,y)dy-u(t,x),
\]
can describe the  movements of organisms between not only  adjacent   but also nonadjacent spatial locations (see, e.g. Berestycki et al. \cite{BCH1}, Kao et al. \cite{KLS2010}, Murray \cite{Mur1993} and Wang \cite{wang2002}). Here the kernel $k(\cdot)$ is a continuous and nonnegative function with  $\int_{\mathbb R}k(x)dx=1$. Moreover, we assume that
\begin{itemize}
\item[(K1)]   there is a constant $\lambda>0$ such that $\int_{\mathbb R}k(x)e^{\lambda |x|}dx<+\infty$;
\item[(K2)]  $k(x_1)>0$ and $k(x_2)>0$ for some constants $x_1\in\mathbb R^+$ and $x_2\in\mathbb R^-$.
\end{itemize}
Assumption (K1) is called the Mollison condition.
For classical results on traveling wave solutions of equation \eqref{1.1}, we refer to Schumacher \cite{Sch1980}, Bates et al. \cite{BFRW}, Chen \cite{Chen1997}, Chen and Guo \cite{CG2003}, Carr and Chmaj \cite{CC2004}, Coville, D\'{a}vila and Mart\'{\i}nez \cite{CDM2008}, Yagisita \cite{Yag2009}, and Sun et al. \cite{SLW2011}. Entire solutions of equation \eqref{1.1} were studied by Li et al. \cite{LSW} and Sun et al. \cite{SZLW2018}.

Spreading speed is an important concept that describes the phenomenon of spatial propagation in many biological and ecological problems, such as the spatial spread of infection diseases and the invasion of species. In 1975, Aronson~and~Weinberger~\cite{AW1975} studied spreading speed of the following reaction-diffusion equation
\begin{equation}\label{1.4}
\left\{
\begin{aligned}
&u_t=u_{xx} + f(u),~~t>0,~x\in \mathbb R,\\
&u(0,x)=u_0(x),~x\in\mathbb R.
\end{aligned}
\right.
\end{equation}
When $f$ is monstable and $f'(0)>0$, they showed that
if $u_0(\cdot)\not\equiv0$ and $0\leqslant u(x)\leqslant1$ for $x\in\mathbb R$, then   $u(t,x)$   satisfies
\begin{equation}\label{1.5}
\lim\limits_{t\rightarrow+\infty} u(t,x)=1 \;\; \text{for any}~x\in\mathbb R.
\end{equation}
Moreover, if $u_0(x)$ is compactly supported on $\mathbb R$, then there is a  constant $c^*>0$ such that
\[
\lim\limits_{t\rightarrow+\infty}u(t,x+ct)=\left\{
\begin{aligned}
&1,&&|c|<c^*,\\
&0,&&|c|>c^*
\end{aligned}
\right. \;\;
\text{for any}~x\in\mathbb R.
\]
The constant  $c^*$ is  called the asymptotic speed of spread (for short, {\it spreading speed}) of equation \eqref{1.4}.
For more results on  spreading speed theory, we refer to Kolmogorov et al. \cite{KPP1937}, Aronson and Weinberger \cite{AW1975,AW1978},  Liang and Zhao \cite{LZ2008,LZ2010}, Lui \cite{Lui1989}, Weinberger \cite{Wei1982}, Weinberger et al. \cite{WLL2002}, Yi and Zou \cite{YZ2015}, and the references cited therein.

For the nonlocal dispersal equation \eqref{1.1}, Lutscher et al. \cite{LPL2005} considered the spreading speed  and proved that there are two constants $c_r^*$ and $c_l^*$ such that
\begin{equation}\label{1.2}
\lim\limits_{t\rightarrow+\infty}u(t,x+ct)=\left\{
\begin{aligned}
&1,&&c_l^*<c<c_r^*,\\
&0,&&c<c_l^*~\text{or}~c>c_r^*
\end{aligned}
\right.
\text{ for any }~x\in\mathbb R,
\end{equation}
where
\begin{equation}\label{1.7}
c_l^*\triangleq \sup\limits_{\lambda\in\mathbb R^-}\Big\{\lambda^{-1}\Big[\int_{\mathbb R}k(x)e^{\lambda x}dx-1+f'(0)\Big]\Big\},
\end{equation}
\begin{equation}\label{1.8}
c_r^*\triangleq \inf\limits_{\lambda\in\mathbb R^+}\Big\{\lambda^{-1}\Big[\int_{\mathbb R}k(x)e^{\lambda x}dx-1+f'(0)\Big]\Big\}.
\end{equation}
The constants $c_l^*$ and $c_r^*$  are  called \textit{spreading speeds to the left} and \textit{to the right} of the nonlocal dispersal equation, respectively. Note that $c_r^*$ may not be equal  to $-c_l^*$  because of the asymmetry of $k$. Here the asymmetry of $k$ means that the probability that organisms move from point $x$ to point $x+y$ is not equal to that  from   $x$ to  $x-y$.
In addition,  Finkelshtein et al. \cite{FKT2015,FKT2018} extended this conclusion  into high dimensional space  $\mathbb R^d$, which is more complex because of the radial asymmetry of kernels. For more results about  spreading speeds of nonlocal dispersal equations, we refer to Rawal et al. \cite{RSZ2015}, Shen and Zhang \cite{SZ2010} and Zhang et al. \cite{ZLW2012}.

The aim of this paper is to study some new problems on  spreading speeds of nonlocal dispersal equations. The three main topics we cover are:  identifying the signs of spreading speeds,   improving the  proof of the  spreading speed result  and establishing the relationship between spreading speed and exponentially decaying initial data, which we describe in turn next.

First, we identify the signs of spreading speeds. In reaction-diffusion equation,  the spreading speed to the right $c^*$ is always positive and that to the left $-c^*$ is always negative. We wonder whether this  remains true in nonlocal dispersal equations. It is significant to  identify the signs of spreading speeds, since they have important influences on   spatial property of solutions and   stability of equilibrium states (see the influences on spatial dynamics below).  In a related work, Coville et al. \cite{CDM2008}  showed that asymmetric kernels may induce  nonpositive minimal wave speed which  always coincides with spreading speed in the Fisher-KPP case. However, they did not point out when the minimal wave speed is nonpositive.

We find that the spreading speed to the left $c_l^*$  has the same sign as that of $E(k)-f'(0)$ and the spreading speed to the right $c_r^*$ has the same sign as that of $E(k)+f'(0)$. Here $E(k)$ stands for the asymmetry level of $k$ and is defined by
\[
E(k)\triangleq\text{sign}(J(k))\left[1-\inf\limits_{\lambda\in\mathbb R}\left\{\int_{\mathbb R}k(x)e^{\lambda x}dx\right\}\right],
\]
where $J(k)\triangleq\int_{\mathbb R}k(x)xdx$ is the first moment and  $k$ belongs to the set  that consists of all nonnegative and continuous functions satisfying (K1) and $\int_{\mathbb R}k(x)dx=1$.
From this result, we show that  asymmetric dispersal influences the signs of spreading speeds, and further  influences the spatial dynamics
in three aspects: it can determine the spatial propagation directions of solutions, influence the stability of equilibrium states, and affect the monotone property of solutions. More details are given in Section 2.

The results are applied to two special cases where  $k$ is a normal distribution and a uniform distribution, respectively. We present more details of the calculation of  $E(k)$ and show how the asymmetric dispersal influences spatial dynamics in Section 5.

Second, we give an improved proof of the spreading speed result. In \cite{LPL2005}, Lutscher et al. proved the  spreading speed result by constructing an innovative lower solution of nonlocal dispersal equation \eqref{1.1}, which can spread at any speed $c$ in $(c_l^*,c_r^*)$, as follows
\begin{equation}\label{1.11}
\underline u (t,x)=
\left\{
\begin{aligned}
&\varepsilon e^{-s(x-ct)}\sin(\gamma(x-ct)),&&x-ct\in[0,\pi/\gamma],\\
&0,&&x-ct>\pi/\gamma.
\end{aligned}
\right.
\end{equation}
In the construction of this lower solution,  they needed to make some technical requirements on $k$. For example, they assumed that $\text{supp}(k)=\mathbb R$ and the function $x\mapsto\exp(sx)k(x)$ is decreasing for large enough $x$. They also made some requirements on the monotone property  of the function $A(s)=(\int_{\mathbb R}k(x)e^{sx}dx-1+f'(0))/s$, $s\neq0$.

In this paper, without any additional assumptions, we construct two new lower solutions  which   spread  at   speeds of $c_1$  and $c_2$, respectively, as follows
\begin{equation}\label{1.3}
\underline u_i(t,x)=\max\{0,H_i(e^{\rho_i(-x+c_it+\xi_i)})\},~~i=1,2,\\
\end{equation}
with
\[
H_i(z)=A_iz-B_iz^{1+\delta_i}-D_iz^{1-\delta_i},~~z>0,
\]
where   $c_1\in(c_r^*-\epsilon,c_r^*)$ and $c_2\in(c_l^*,c_l^*+\epsilon)$ for small $\epsilon>0$.

However,  some property of \eqref{1.3}  is not as good as that of \eqref{1.11}, because the speed of \eqref{1.3} is limited to $(c_l^*,c_l^*+\epsilon)$ or $(c_r^*-\epsilon,c_r^*)$.
Therefore, we   give a new method to study the whole situation of $(c_l^*,c_r^*)$, which is called the ``\textbf{forward-backward spreading}'' method.
In this method, for any $\tau>0$ we divide the  time  period of  $[0,\tau]$ into two parts $[0,\kappa\tau]$ and $[\kappa\tau,\tau]$, where $\kappa$ is any   number in $[0,1]$.
In $[0,\kappa\tau]$ we construct a lower solution $u_1(t,x)$ spreading at a speed of $c_1\in(c_r^*-\epsilon,c_r^*)$.  In $[\kappa\tau,\tau]$ we construct another lower solution $u_2(t,x)$ which  spreads at a speed of $c_2\in (c_l^*,c_l^*+\epsilon)$ and satisfies  that $u_2(\kappa\tau,x)\leqslant u_1(\kappa\tau,x)$.
Then these two lower solutions can be regarded as a lower solution  defined in  $[0,\tau]$ whose speed is $\bar{c}=\kappa c_1+(1-\kappa)c_2$. Moreover, the arbitrariness of $\kappa$ ensures that $\bar{c}$ can be equal to any number in $[c_1, c_2]$.
We remark that the term ``forward-backward spreading'' comes from the special case    $c_l^*<0<c_r^*$, which means $u_1(t,x)$ spreads forward and $u_2(t,x)$ spreads backward.

By constructing the new lower solutions and applying the ``forward-backward spreading'' method, we improve the proof of spreading speed  result and further obtain a property about the spatial propagation of solutions  (see Corollary \ref{th2.5}).
\begin{remark}\rm
In the study of traveling wave solutions, we usually  construct the lower solution $\underline v(t,x)=\max\{0,e^{\rho(-x+ct)}-Le^{\rho(1+\delta)(-x+ct)}\}$ where $L$ is large enough. Note that $\underline v(t,x)>0$ for  $x$ large enough.
Different from $\underline v(t,x)$,   the lower solutions defined by \eqref{1.3} have no tails on two sides, which means that the function $\underline u_i(t,\cdot)$ is compactly supported. Therefore, the lower solutions defined by \eqref{1.3} can be used to study the spreading speed for compactly supported  initial data.
\end{remark}

Third, we establish the relationship between spreading speed and exponentially decaying initial data. In a reaction-diffusion equation, it is well-known that the decay behavior to zero as $x\rightarrow\pm\infty$ of the initial data  influences  the spreading speed, see e.g.  Booty et al. \cite{BHM1993}, Hamel and Nadin \cite{HN2012}, McKean \cite{McK1975}, Sattinger \cite{Sat1976}. Moreover, when the initial datum is exponentially unbound or the kernel is ``fat-tailed'', the  acceleration propagation (namely, its spreading speed approaches to infinity as $t\rightarrow+\infty$)   is studied by  Alfaro \cite{Alfaro2017}, Alfaro and Coville \cite{AC2016}, Finkelshtein et al. \cite{FKT2016}, Finkelshtein and Tkachov \cite{FT2017}, Garnier \cite{Garnier},   Hamel and Roques \cite{HR2010}, Xu et al. \cite{XLL2018, XLR2018}. Therefore, it is  necessary to study the  influence of initial data on the spreading speed of equation \eqref{1.1}.

Here we consider the exponentially decaying initial function which satisfies that
\[
u_0(x)\sim O(e^{-\lambda |x|})~~\text{as}~|x|\rightarrow +\infty.
\]
When $k$ is symmetric, for $\lambda\in[\lambda^*,+\infty)$ the spreading speed  of equation \eqref{1.1} is $c^*\triangleq c_r^*=-c_l^*$, and for $\lambda\in(0,\lambda^*)$ the spreading speed equals to
\[
c(\lambda)=\lambda^{-1}\Big[\int_{\mathbb R}k(x)e^{\lambda x}dx-1+f'(0)\Big] ~~\text{for}~\lambda\neq0.
\]
Moreover,  $c(\lambda)$ decreases strictly along with the increase of  $\lambda\in(0,\lambda^*)$ and we have $c^*=c(\lambda^*)$.

The rest of this paper is organized as follows. In Section 2, we study the signs of spreading speeds and the influences of asymmetric dispersal on  spatial dynamics. Section 3 presents the new lower solutions and  the new ``forward-backward spreading'' method.  By using them, we give an improved proof of the  spreading speed result. Section 4 deals with the relationship between spreading speed and   exponentially decaying initial data. In Section 5, two examples are provided to explain the   results on the signs of spreading speeds.

\section{The signs of spreading speeds}

In this section we present the main results about the signs of spreading speeds and the influences of asymmetric dispersal on the spatial dynamics.

First we introduce some notations.  Let $c(\cdot)$ be the function defined by
\begin{equation}\label{2.1}
c(\lambda)=\lambda^{-1}\Big[\int_{\mathbb R}k(x)e^{\lambda x}dx-1+f'(0)\Big] ~~\text{for}~\lambda\in(\lambda^-,0)\cup(0,\lambda^+),
\end{equation}
where
\begin{eqnarray}
&& \lambda^+=\sup\Big\{\lambda>0~\big|\int_\mathbb R k(x)e^{\lambda x}dx<+\infty\Big\}\in\mathbb R^+\cup\{+\infty\}, \label{1.9} \\
&& \lambda^-=\inf\Big\{\lambda<0~\big|\int_\mathbb R k(x)e^{\lambda x}dx<+\infty\Big\}\in\mathbb R^-\cup\{-\infty\}. \label{1.10}
\end{eqnarray}
When $k$ satisfies (K1) and (K2),  by $\frac{\partial^2}{\partial \lambda^2}\int_{\mathbb R}k(x)e^{\lambda x}dx>0$, we can find a unique constant $\lambda(k)\in(\lambda^-,\lambda^+)$ such that
\[
\int_{\mathbb R}k(x)e^{\lambda(k) x}dx=\inf\limits_{\lambda\in\mathbb R}\int_{\mathbb R}k(x)e^{\lambda x}dx,~\text{namely}~~\int_{\mathbb R}k(x)e^{\lambda(k) x}xdx=0.
\]
Since the  function  $\lambda \mapsto \int_{\mathbb R}k(x)e^{\lambda  x}xdx$ is  strictly increasing, it holds that
\begin{equation}\label{2.8}
\int_{\mathbb R}k(x)e^{\lambda x}xdx>0~\text{for}~\lambda>\lambda(k)~\text{and}~\int_{\mathbb R}k(x)e^{\lambda x}xdx<0~\text{for}~\lambda<\lambda(k).
\end{equation}
It follows from $J(k)=\left.\int_{\mathbb R}k(x)e^{\lambda  x}xdx\right|_{\lambda=0}$ that
$\text{sign}(J(k))=-\text{sign}(\lambda(k))$.
Then we have that
\begin{equation}\label{2.9}
E(k)=-\text{sign}(\lambda(k))\left[1-\int_{\mathbb R}k(x)e^{\lambda(k) x}dx\right].
\end{equation}

Now we state two properties of $E(k)$ and show that the spreading speed to the left $c_l^*$  has the same sign as that of $E(k)-f'(0)$ and the spreading speed to the right  $c_r^*$  has the same sign as that of  $E(k)+f'(0)$.

\begin{proposition}\label{pro1.1}
The function $E(k)$ satisfies that
\begin{itemize}
\item[(i)] $E(k)=-E(\check{k})$, where $\check{k}(x)=k(-x)$ for $x\in\mathbb R$;
\item[(ii)] If $k_1$ is more skewed to the right than $k_2$, then $E(k_1)\geqslant E(k_2)$. Here the  concept that $k_1$ is more skewed to the right than $k_2$ means that $k_1(x)\geqslant k_2(x)$ for $x\in\mathbb R^+$ and $k_1(x)\leqslant k_2(x)$ for $x\in\mathbb R^-$.
\end{itemize}
\end{proposition}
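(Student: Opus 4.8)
The plan is to work throughout with the bilateral moment generating function $M(k,\lambda)=\int_{\mathbb R}k(x)e^{\lambda x}\,dx$, so that $E(k)=\mathrm{sign}(J(k))\,[1-\inf_{\lambda\in\mathbb R}M(k,\lambda)]$, and to record at the outset two elementary facts: first, $0\le 1-\inf_{\lambda}M(k,\lambda)<1$, since $M(k,0)=1$ forces the infimum to be $\le 1$ while $M(k,\lambda)>0$ keeps it positive; second, $\lambda\mapsto M(k,\lambda)$ is convex with $M'(k,0)=J(k)$ (the convexity is the fact $\partial_\lambda^2 M>0$ already invoked in the excerpt). For (i) I would simply change variables $x\mapsto-x$: $M(\check k,\lambda)=\int k(-x)e^{\lambda x}\,dx=\int k(y)e^{-\lambda y}\,dy=M(k,-\lambda)$, so $\inf_\lambda M(\check k,\lambda)=\inf_\lambda M(k,\lambda)$ and the bracketed factor is unchanged under reflection. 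Since moreover $J(\check k)=\int k(-x)x\,dx=-J(k)$, the sign prefactor flips, and (i) follows at once: $E(\check k)=-E(k)$.

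The substance is (ii). Setting $\phi=k_1-k_2$, the hypothesis that $k_1$ is more skewed to the right than $k_2$ says exactly that $\phi(x)\ge0$ for $x>0$ and $\phi(x)\le0$ for $x<0$, that is, $x\phi(x)\ge0$ for every $x$. The engine of the whole argument is a single monotonicity lemma for the difference $g(\lambda):=M(k_1,\lambda)-M(k_2,\lambda)=\int_{\mathbb R}\phi(x)e^{\lambda x}\,dx$: one has $g(0)=\int\phi=0$ and $g'(\lambda)=\int_{\mathbb R} x\phi(x)e^{\lambda x}\,dx\ge0$, so $g$ is nondecreasing, whence $g(\lambda)\ge0$ for $\lambda\ge0$ and $g(\lambda)\le0$ for $\lambda\le0$. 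In particular $g'(0)=J(k_1)-J(k_2)\ge0$, so the first moments are ordered, $J(k_1)\ge J(k_2)$. This already controls the sign of $E$: because $\mathrm{sign}(E(k))=\mathrm{sign}(J(k))$, whenever $J(k_1)$ and $J(k_2)$ carry opposite signs (or one of them vanishes) we get $E(k_1)\ge0\ge E(k_2)$ for free, using only $1-\inf_\lambda M\in[0,1)$.

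It then remains to compare magnitudes when $J(k_1)$ and $J(k_2)$ share a sign. If both are positive, convexity together with $M'(k_i,0)=J(k_i)>0$ forces $M(k_i,\lambda)>1$ for $\lambda>0$, so the infimum is located on $\lambda\le0$, i.e. $\inf_{\mathbb R}M(k_i,\cdot)=\inf_{\lambda\le0}M(k_i,\cdot)$; combining this with the lemma's bound $M(k_1,\lambda)\le M(k_2,\lambda)$ on $\lambda\le0$ gives $\inf M(k_1,\cdot)\le\inf M(k_2,\cdot)$, hence $E(k_1)=1-\inf M(k_1,\cdot)\ge 1-\inf M(k_2,\cdot)=E(k_2)$. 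The case of two negative first moments is symmetric: now both infima live on $\lambda\ge0$, where $M(k_1,\lambda)\ge M(k_2,\lambda)$, so $\inf M(k_1,\cdot)\ge\inf M(k_2,\cdot)$, and after multiplying by the common sign $-1$ one again obtains $E(k_1)\ge E(k_2)$. Together with the mixed-sign cases this exhausts all possibilities and proves (ii).

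The only genuinely delicate point is this last magnitude comparison, where one must match the sign of $g(\lambda)$ (dictated by the sign of $\lambda$) to the half-line on which each infimum sits (dictated by the sign of $J(k_i)$ through convexity); getting these two alignments to cooperate is where the argument could go wrong. Everything ultimately rests on the observation $x\phi(x)\ge0$, which renders $g$ monotone, and the remainder is the bookkeeping of signs described above. For kernels satisfying (K1)--(K2) I would also note that this is equivalent to the minimizer formulation \eqref{2.9}, replacing each $\inf_\lambda M(k_i,\cdot)$ by $M(k_i,\lambda(k_i))$, which yields the same conclusion via $M(k_1,\lambda(k_2))\le M(k_2,\lambda(k_2))$ and the minimality of $\lambda(k_1)$.
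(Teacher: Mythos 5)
Your proof is correct, and it takes a recognizably different route from the paper's even though both hinge on the same elementary fact: the skewness hypothesis says exactly that $x\,(k_1(x)-k_2(x))\geqslant 0$, hence $\int_{\mathbb R}(k_1(x)-k_2(x))e^{\lambda x}x\,dx\geqslant 0$ for every $\lambda$. The paper exploits this through the \emph{minimizers}: it first orders them, $\lambda(k_1)\leqslant\lambda(k_2)$ (monotonicity of $\lambda\mapsto\int_{\mathbb R} k_1(x)e^{\lambda x}x\,dx$ applied at $\lambda(k_2)$), then in each sign case represents $E(k_i)$ via \eqref{2.9} as $\int_{\lambda(k_i)}^0\big[\int_{\mathbb R}k_i(x)e^{\lambda x}x\,dx\big]d\lambda$ and splits $E(k_1)-E(k_2)$ into two manifestly nonnegative integrals. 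You never invoke the minimizers: your lemma ($g(0)=0$, $g'\geqslant 0$) gives pointwise domination of the moment generating functions on each half-line, and convexity pins each infimum to the half-line on which the domination goes the right way; your trichotomy in $\mathrm{sign}(J(k_i))$ is the mirror image of the paper's trichotomy in $\mathrm{sign}(\lambda(k_i))$, since $\mathrm{sign}(J(k))=-\mathrm{sign}(\lambda(k))$. What your version buys is economy and generality: it needs only the infimum, not its attainment, so it never uses (K2) and covers the whole class on which the paper actually defines $E(k)$ (kernels satisfying (K1) with $\int_{\mathbb R}k=1$); the paper's version is tied to (K1)--(K2) but produces exactly the formulation \eqref{2.9} that the rest of the paper (e.g.\ Theorem \ref{th1.2}) manipulates. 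Two harmless inaccuracies you could tighten: the strict bound $1-\inf_{\lambda}M(k,\lambda)<1$ can fail (the infimum may be $0$ for one-sidedly supported kernels), but you only ever use nonnegativity of that bracket; and $g$ should be differentiated only on the interval where both moment generating functions are finite --- off that interval the pointwise inequalities you need hold trivially, since for $\lambda<0$ finiteness of $M(k_2,\lambda)$ forces finiteness of $M(k_1,\lambda)$, and symmetrically for $\lambda>0$.
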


\begin{proof}
Since $J(k)=-J(\check{k})$ and
\[
\inf\limits_{\lambda\in\mathbb R}\int_{\mathbb R}k(x)e^{\lambda x}dx=\inf\limits_{\lambda\in\mathbb R}\int_{\mathbb R}\check{k}(x)e^{\lambda x}dx,
\]
we have that  $E(k)=-E(\check{k})$.

Now suppose that $k_1(x)\geqslant k_2(x)$ for $x\in\mathbb R^+$ and $k_1(x)\leqslant k_2(x)$ for $x\in\mathbb R^-$. Denote  $\lambda_1\triangleq\lambda(k_1)$ and $\lambda_2\triangleq\lambda(k_2)$.
By $\int_{\mathbb R}(k_1(x)-k_2(x))e^{\lambda_2x}xdx\geqslant0$, we get from $\int_{\mathbb R}k_i(x)e^{\lambda_ix}xdx=0$   that
\[
\int_{\mathbb R}k_1(x)e^{\lambda_2x}xdx\geqslant0=\int_{\mathbb R}k_1(x)e^{\lambda_1x}xdx.
\]
Notice that the function  $\lambda \mapsto \int_{\mathbb R}k_1(x)e^{\lambda  x}xdx$ is increasing, then $\lambda_1\leqslant \lambda_2$. Now we consider three cases.
First,  when $\lambda_1\leqslant 0\leqslant\lambda_2$,  we easily check that $E(k_1)\geqslant0\geqslant E(k_2)$ by \eqref{2.9}. Next, consider  the case  $\lambda_1\leqslant\lambda_2 \leqslant0$. Some calculations imply that
\[
\begin{aligned}
&E(k_1)=1-\int_{\mathbb R}k_1(x)e^{\lambda_1x}dx=\int_{\lambda_1}^0\left[\int_{\mathbb R}k_1(x)e^{\lambda x}xdx\right]d\lambda,\\
&E(k_2)=1-\int_{\mathbb R}k_2(x)e^{\lambda_2x}dx=\int_{\lambda_2}^0\left[\int_{\mathbb R}k_2(x)e^{\lambda x}xdx\right]d\lambda.
\end{aligned}
\]
We have that
\[
E(k_1)-E(k_2)=\int_{\lambda_1}^{\lambda_2}\left[\int_{\mathbb R}k_1(x)e^{\lambda x}xdx\right]d\lambda+\int_{\lambda_2}^0\left[\int_{\mathbb R}(k_1(x)-k_2(x))e^{\lambda x}xdx\right]d\lambda.
\]
It follows from  \eqref{2.8} that $\int_{\mathbb R}k_1(x)e^{\lambda x}xdx>0$ for $\lambda>\lambda_1$.
Then we  obtain   $E(k_1)\geqslant E(k_2)$ by $\int_{\mathbb R}(k_1(x)-k_2(x))e^{\lambda x}xdx\geqslant0$. Finally,  in the case  $0\leqslant\lambda_1\leqslant\lambda_2$, we can prove  $E(k_1)\geqslant E(k_2)$  by a similar method.
\end{proof}

From Proposition \ref{pro1.1}, we can use $E(k)$ to describe  the asymmetry level of $k$.
Indeed, it is easy to check that  $E(k)\in[-1,1]$ by $\int_{\mathbb R} k(x)dx=1$ and $k(x)\geqslant 0$, $x\in\mathbb R$.
In particular, if $k(\cdot)$ is  symmetric, then  $E(k)=0$; and
if $k(x)=0$ for all $x\in\mathbb R^+$, then $E(k)=1$.
Similarly, if $k(x)=0$ for all $x\in\mathbb R^-$, then $E(k)=-1$.
Moreover, when $E(k)>0$, $k$ can be regarded as a function skewed to the right  and when $E(k)<0$, it is a function skewed to the left.

\begin{remark}\rm
The properties (i) and (ii) in Proposition \ref{pro1.1} are two fundamental requirements for the function describing  the asymmetry level of $k$. For example, consider  $E_g(k)\triangleq \int_{\mathbb R} k(x)g(x)dx$, where $g$ is an odd    function and is positive in $\mathbb R^+$. Then we can use $E_g$  to describe the asymmetry level of $k$ too.
It is  easy to check that $E_g(k)$ satisfies (i) and (ii).
A special form of $E_g(k)$ is given by the moment function $\int_{\mathbb R}k(x)x^Ndx$, where $N$ is an odd number.
\end{remark}

The following lemma will be used several times in the remainder  of the paper.

\begin{lemma}\label{th2.99}
For any $k(\cdot)$ satisfying (K1) and (K2), there are   unique  $\lambda_r^*\in(0,\lambda^+)$ and $\lambda_l^*\in(\lambda^-,0)$ such that
\begin{equation}\label{2.2}
c_r^*=\inf\limits_{\lambda\in(0,\lambda^+)}\{c(\lambda)\}=c(\lambda_r^*)=\int_{\mathbb R}k(x)e^{\lambda_r^* x}xdx.
\end{equation}
and
\begin{equation}\label{2.3}
c_l^*=\sup\limits_{\lambda\in(\lambda^-,0)}\{c(\lambda)\}=c(\lambda_l^*)=\int_{\mathbb R}k(x)e^{\lambda_l^* x}xdx.
\end{equation}
\end{lemma}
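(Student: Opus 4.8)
The plan is to reduce the whole statement to the one–variable calculus of $c(\lambda)$, exploiting the strict convexity of the moment generating function $M(\lambda)\triangleq\int_{\mathbb{R}}k(x)e^{\lambda x}dx$. By (K1) this integral is finite and smooth on $(\lambda^-,\lambda^+)$, with $M(0)=1$, $M'(\lambda)=\int_{\mathbb{R}}k(x)e^{\lambda x}x\,dx$ and $M''(\lambda)=\int_{\mathbb{R}}k(x)e^{\lambda x}x^2\,dx>0$, so $M$ is strictly convex; in this notation $c(\lambda)=\lambda^{-1}(M(\lambda)-1+f'(0))$ on $(0,\lambda^+)$. First I would differentiate and record $c'(\lambda)=\lambda^{-2}g(\lambda)$ with $g(\lambda)\triangleq\lambda M'(\lambda)-M(\lambda)+1-f'(0)$. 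The point of this bookkeeping is that a zero of $c'$ is exactly a zero of $g$, and at such a point $\lambda M'(\lambda)=M(\lambda)-1+f'(0)$ rearranges to $c(\lambda)=M'(\lambda)=\int_{\mathbb{R}}k(x)e^{\lambda x}x\,dx$, which is precisely the last equality claimed in \eqref{2.2}. So it suffices to show $g$ has a unique zero $\lambda_r^*$ in $(0,\lambda^+)$ and that it is the global minimizer of $c$.

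The monotonicity is the decisive and easy part. We have $g'(\lambda)=\lambda M''(\lambda)>0$ on $(0,\lambda^+)$, so $g$ is strictly increasing there, while $\lim_{\lambda\to0^+}g(\lambda)=-f'(0)<0$ by (H). Hence $g$ vanishes at most once; once it turns positive it stays positive, so $c$ is strictly decreasing up to that zero and strictly increasing afterwards. Consequently any zero of $g$ is automatically the unique global minimizer of $c$, which disposes of uniqueness and the variational characterization at once.

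The remaining point—and the step I expect to be the main obstacle—is existence, i.e. showing that $g$ really does become positive somewhere in $(0,\lambda^+)$, equivalently that the infimum of $c$ is attained in the open interval rather than only approached at $\lambda^+$. Here I would use the boundary behaviour of $c$: as $\lambda\to0^+$ the numerator tends to $f'(0)>0$ while the denominator tends to $0^+$, so $c(\lambda)\to+\infty$; and as $\lambda\to(\lambda^+)^-$ the two–sided positivity in (K2)—continuity of $k$ together with $k(x_1)>0$ for some $x_1>0$—forces $M$ (hence $c$) to grow so that $c(\lambda)\to+\infty$, whence continuity of $c$ produces an interior minimum. The delicate case is $\lambda^+<+\infty$ with $M(\lambda^+)<+\infty$, where one must quantify the growth of $M$ or of $M'$ near $\lambda^+$ using the positive right tail of $k$; this endpoint estimate is exactly where (K1) and (K2) are needed, and it is the part requiring genuine care rather than routine calculus.

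Finally, the statement for $c_l^*$ on $(\lambda^-,0)$ follows by the mirror argument. There $\lambda<0$ makes $g'(\lambda)=\lambda M''(\lambda)<0$, so $g$ is strictly decreasing, with $g(\lambda)\to-f'(0)<0$ as $\lambda\to0^-$ and $g(\lambda)\to+\infty$ as $\lambda\to(\lambda^-)^+$ thanks to the positive left tail of $k$ in (K2). Thus $g$ has a unique zero $\lambda_l^*\in(\lambda^-,0)$, which is the unique maximizer of $c$ on $(\lambda^-,0)$ and satisfies $c(\lambda_l^*)=M'(\lambda_l^*)=\int_{\mathbb{R}}k(x)e^{\lambda_l^* x}x\,dx$, giving \eqref{2.3}.
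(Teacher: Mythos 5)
Your argument has the same skeleton as the paper's proof: differentiate $c$, locate critical points, get uniqueness from the strict convexity of $M(\lambda)=\int_{\mathbb R}k(x)e^{\lambda x}dx$, and read the identity $c(\lambda_r^*)=M'(\lambda_r^*)=\int_{\mathbb R}k(x)e^{\lambda_r^* x}x\,dx$ off the first-order condition. Your uniqueness step---$c'(\lambda)=\lambda^{-2}g(\lambda)$ with $g(\lambda)=\lambda M'(\lambda)-M(\lambda)+1-f'(0)$, $g'(\lambda)=\lambda M''(\lambda)>0$ on $(0,\lambda^+)$ and $g(0^+)=-f'(0)<0$---is correct, and is in fact a cleaner packaging of the paper's remark that $c''>0$ at every critical point; the mirror argument on $(\lambda^-,0)$ is likewise fine.

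The genuine gap is the existence step when $\lambda^+<+\infty$, exactly the point you defer. Your claim that (K2) forces $c(\lambda)\to+\infty$ as $\lambda\to(\lambda^+)^-$ is false in the borderline case: (K2) only says $k$ is positive somewhere on each side, which is compatible with both $M(\lambda^+)<+\infty$ and $M'(\lambda^+)<+\infty$. Concretely, take $k(x)=Ce^{-|x|}(1+x^4)^{-1}$ with $C$ a normalizing constant; then (K1) and (K2) hold, $\lambda^+=1$, and both $M(1)$ and $M'(1)$ are finite, so $g(\lambda)\to\lambda^+M'(\lambda^+)-M(\lambda^+)+1-f'(0)$ as $\lambda\to(\lambda^+)^-$. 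Choosing $f(u)=ru(1-u)$ with $r=f'(0)>\lambda^+M'(\lambda^+)-M(\lambda^+)+1$ makes this limit negative; since $g$ is increasing, $g<0$ on all of $(0,\lambda^+)$, hence $c$ is strictly decreasing there, its infimum is approached only at the endpoint, and no $\lambda_r^*\in(0,\lambda^+)$ with $c(\lambda_r^*)=M'(\lambda_r^*)$ exists---the statement itself fails. So the endpoint estimate you postpone cannot be supplied from (K1) and (K2) alone; what is really needed is the extra hypothesis that $M(\lambda)\to+\infty$ as $\lambda\to(\lambda^+)^-$, which is automatic when $\lambda^+=+\infty$ (the only case your exponential-growth argument, resting on the positive mass of $k$ near $x_1$, actually covers). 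You are in good company: the paper's own proof makes the same unjustified claim, asserting that $\int_N^{+\infty}k(x)e^{\lambda x}dx\to+\infty$ as $\lambda\to\lambda^+<+\infty$ (with $N$ its large constant), which fails for the kernel above since monotone convergence gives a finite limit. In short, you reproduced the paper's argument together with its one real defect---the difference is that you flagged the soft spot as unresolved, whereas the paper glides over it.
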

\begin{proof}
For $\lambda\in(\lambda^-,0)\cup(0,\lambda^+)$,  a simple calculation implies that
\[
c'(\lambda)=\lambda^{-1}\int_\mathbb R k(x)e^{\lambda x}xdx-\lambda^{-2}\Big[\int_{\mathbb R}k(x)e^{\lambda x}dx-1+f'(0)\Big].
\]
We get  that $\lim\limits_{\lambda^\rightarrow 0^+}c'(\lambda)=-\infty$. Next, we   show that
\begin{equation}\label{2.5}
c'(\lambda)>0~~\text{for any $\lambda$ close  to $\lambda^+$}.
\end{equation}
In the case   $\lambda^+<+\infty$, let $M$ be a positive constant satisfying that $M\lambda^+>1$. Then  there are two constants $C_1$ and $C_2$ such that for any $\lambda\in(0,\lambda^+)$,
\[
\int_\mathbb R k(x)e^{\lambda x}xdx\geqslant M\int_M^{+\infty}k(x)e^{\lambda x}dx+C_1,~~ \int_\mathbb R k(x)e^{\lambda x}dx\leqslant \int_M^{+\infty}k(x)e^{\lambda x}dx+C_2.
\]
Then we can obtain  \eqref{2.5}  by $\int_M^{+\infty}k(x)e^{\lambda x}dx\rightarrow+\infty$ as $\lambda\rightarrow\lambda^+$. In the case   $\lambda^+=+\infty$, we need to rewrite $c'(\lambda)$ by
\[
c'(\lambda)=\lambda^{-2}\Big[\int_\mathbb R k(x)e^{\lambda x}(\lambda x-1)dx+1-f'(0)\Big].
\]
Then  we get \eqref{2.5} by $e^{\lambda x}\geqslant \lambda x+1$, $x\in\mathbb R$.
On the other hand, when $c'(\lambda)=0$, it follows that  $c''(\lambda)>0$ for $\lambda\in(0,\lambda^+)$. Therefore, there is a unique constant $\lambda_r^*\in(0,\lambda^+)$ such that
\[
c'(\lambda_r^*)=0~\text{and}~ c(\lambda_r^*)=\inf\limits_{\lambda\in(0,\lambda^+)}\{c(\lambda)\}=\int_{\mathbb R}k(x)e^{\lambda_r^* x}xdx.
\]
Moreover, we have that
\[
c'(\lambda)<0~\text{for}~\lambda\in(0,\lambda_r^*)~~\text{and}~c'(\lambda)>0~\text{for}~\lambda\in(\lambda_r^*,\lambda^+).
\]
Similarly, the existence and uniqueness of $\lambda_l^*$ can  be obtained.
\end{proof}

\begin{theorem}\label{th1.2}
Suppose  that {\rm(H)}, {\rm(K1)} and {\rm{(K2)}} hold.
Then $c_l^*$ and $c_r^*$ satisfy that
\begin{itemize}
  \item[(i)] if $E(k)>f'(0)$, then $0<c_l^*<c_r^*${\rm;}
  \item[(ii)] if $E(k)=f'(0)$, then $0=c_l^*<c_r^*${\rm;}
  \item[(iii)] if $-f'(0)<E(k)<f'(0)$, then $c_l^*<0<c_r^*${\rm;}
  \item[(iv)] if $E(k)=-f'(0)$, then $c_l^*<c_r^*=0${\rm;}
  \item[(v)] if $E(k)<-f'(0)$, then $c_l^*<c_r^*<0$.
\end{itemize}
\end{theorem}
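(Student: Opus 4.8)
The plan is to reduce the five-case statement to three facts: $\text{sign}(c_r^*)=\text{sign}(E(k)+f'(0))$, $\text{sign}(c_l^*)=\text{sign}(E(k)-f'(0))$, and the strict ordering $c_l^*<c_r^*$. Once these are established, cases (i)--(v) drop out by elementary logic using $f'(0)>0$: e.g.\ $E(k)>f'(0)$ makes both $E(k)\pm f'(0)$ positive, so $0<c_l^*<c_r^*$; $-f'(0)<E(k)<f'(0)$ gives $c_l^*<0<c_r^*$; and the borderline values $E(k)=\pm f'(0)$ produce the stated equalities. Throughout I would write $g(\lambda)=\int_{\mathbb R}k(x)e^{\lambda x}dx$, which is strictly convex with $g(0)=1$, $g'(\lambda)=\int_{\mathbb R}k(x)e^{\lambda x}x\,dx$, and $g'(\lambda(k))=0$, so that $g(\lambda(k))=\min g$; I would also keep at hand from \eqref{2.9} that $E(k)=-\text{sign}(\lambda(k))\,[1-g(\lambda(k))]$, that $\text{sign}(E(k))=-\text{sign}(\lambda(k))$, and that $1-g(\lambda(k))>0$ precisely when $\lambda(k)\neq0$.

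First I would pin down the sign of $c_r^*$. By Lemma \ref{th2.99} the value $c_r^*=\min_{\lambda\in(0,\lambda^+)}c(\lambda)$ is attained, and for $\lambda>0$ the sign of $c(\lambda)$ equals that of $g(\lambda)-(1-f'(0))$; hence $c_r^*\leqslant0$ if and only if $\inf_{(0,\lambda^+)}g\leqslant 1-f'(0)$, with the corresponding statements for strict inequality and equality. I would then split on the sign of $\lambda(k)$. If $\lambda(k)\leqslant0$ (equivalently $E(k)\geqslant0$), then $g'>0$ on $(0,\lambda^+)$, so $g$ is strictly increasing there with infimum $g(0)=1>1-f'(0)$, forcing $c_r^*>0$, consistent with $E(k)+f'(0)>0$. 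If $\lambda(k)>0$ (equivalently $E(k)<0$), then the infimum is the global minimum $g(\lambda(k))=\min g$, and since $1-\min g=-E(k)$ in this regime, comparing $\min g$ with $1-f'(0)$ is the same as comparing $E(k)$ with $-f'(0)$; the three subcases yield $c_r^*>0$, $c_r^*=0$, $c_r^*<0$ according as $E(k)>-f'(0)$, $E(k)=-f'(0)$, $E(k)<-f'(0)$. Together these give $\text{sign}(c_r^*)=\text{sign}(E(k)+f'(0))$.

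For $c_l^*$ I would avoid repeating the argument by exploiting the reflection $\check k(x)=k(-x)$. A direct computation gives $\check g(\lambda)=g(-\lambda)$ and hence $\check c(\lambda)=-c(-\lambda)$, from which $\check c_r^*=-c_l^*$. Applying the $c_r^*$-result to $\check k$ and invoking Proposition \ref{pro1.1}(i), namely $E(\check k)=-E(k)$, yields $\text{sign}(-c_l^*)=\text{sign}(-E(k)+f'(0))$, that is $\text{sign}(c_l^*)=\text{sign}(E(k)-f'(0))$. Finally, the ordering $c_l^*<c_r^*$ follows again from Lemma \ref{th2.99}: there $c_l^*=g'(\lambda_l^*)$ and $c_r^*=g'(\lambda_r^*)$ with $\lambda_l^*\in(\lambda^-,0)$ and $\lambda_r^*\in(0,\lambda^+)$, hence $\lambda_l^*<\lambda_r^*$; since $g''(\lambda)=\int_{\mathbb R}k(x)e^{\lambda x}x^2\,dx>0$ makes $g'$ strictly increasing, this gives $c_l^*<c_r^*$ at once.

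The main obstacle I anticipate is the sign bookkeeping forced by the definition \eqref{2.9} of $E(k)$: because $E(k)$ carries the factor $\text{sign}(\lambda(k))$, the translation between ``$\min g$ versus $1-f'(0)$'' and ``$E(k)$ versus $\mp f'(0)$'' is only clean after separating $\lambda(k)>0$ from $\lambda(k)\leqslant0$. One must also check the attainment point carefully: the infimum of $g$ over a given half-line is the global minimum $g(\lambda(k))$ exactly when $\lambda(k)$ lies on that side, and otherwise it is the unattained boundary value $g(0)=1$, which (since $f'(0)>0$) still settles the sign unambiguously. Getting these case distinctions and boundary values right is where the care is needed; the remaining steps are routine.
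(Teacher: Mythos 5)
Your proof is correct, and it rests on the same ingredients as the paper's: Lemma \ref{th2.99} (interior attainment of $c_l^*$ and $c_r^*$), the representation \eqref{2.9} of $E(k)$ through the minimum of $g(\lambda)=\int_{\mathbb R}k(x)e^{\lambda x}dx$, and the reduction of everything to comparing $\min g$ with $1-f'(0)$. The difference is in the decomposition. The paper argues the five cases directly on the side of $c_l^*$: for (i) it bounds the supremum in \eqref{1.7} from below by its value at $\lambda=\lambda(k)<0$; for (ii) it observes that the bracket $\int_{\mathbb R}k(x)e^{\lambda x}dx-1+f'(0)$ is nonnegative for all $\lambda$ and vanishes at $\lambda(k)$, giving $c_l^*=0$; for (iii) that this bracket is positive everywhere; and it disposes of (iv) and (v) with ``the proofs are similar.'' You instead isolate two sign identities, $\text{sign}(c_r^*)=\text{sign}(E(k)+f'(0))$ and $\text{sign}(c_l^*)=\text{sign}(E(k)-f'(0))$, prove only the first by hand (splitting on the sign of $\lambda(k)$, with the attainment caveat --- infimum equal to $g(\lambda(k))$ when $\lambda(k)>0$, unattained boundary value $1$ when $\lambda(k)\leqslant0$ --- correctly flagged), and transfer it to $c_l^*$ via the reflection $\check k(x)=k(-x)$, using $\check c_r^*=-c_l^*$ and Proposition \ref{pro1.1}(i). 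This buys two things: the five cases then follow by pure logic from $f'(0)>0$, and the left/right symmetry that the paper leaves implicit in its ``similar'' remark becomes an explicit one-line argument, so nothing is proved twice. The paper's version is slightly lighter per case (case (i), for instance, needs only an evaluation of the supremum at a single point rather than a full sign characterization). Both proofs obtain the strict ordering identically, from $c_l^*=g'(\lambda_l^*)<g'(\lambda_r^*)=c_r^*$ with $\lambda_l^*<0<\lambda_r^*$ and $g'$ strictly increasing.
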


\begin{proof}
By \eqref{2.2} and \eqref{2.3}, it is  easy to check that $c_l^*<c_r^*$, since  the  function  $\lambda \mapsto \int_{\mathbb R}k(x)e^{\lambda  x}xdx$ is  strictly increasing.
When $E(k)>f'(0)$, by \eqref{2.9} we get that $\lambda(k)<0$ and
\[
E(k)=1-\int_{\mathbb R}k(x)e^{\lambda(k)x}dx>f'(0).
\]
From  \eqref{1.7} it follows that
\[
c_l^*\geqslant \lambda(k)^{-1}\Big[\int_{\mathbb R}k(x)e^{\lambda(k) x}dx-1+f'(0)\Big]>0.
\]
Then case (i) is proved.
When $E(k)=f'(0)$, it is easy to check  that $\lambda(k)<0$ and
\[
E(k)=1-\inf\limits_{\lambda\in\mathbb R}\left\{\int_{\mathbb R}k(x)e^{\lambda x}dx\right\}=1-\int_{\mathbb R}k(x)e^{\lambda(k)x}dx=f'(0).
\]
It implies that $\int_{\mathbb R}k(x)e^{\lambda x}dx-1+f'(0)\geqslant0$ for any $\lambda\in\mathbb R$. Then  we obtain $c_l^*=0$ by  \eqref{1.7}. Case (ii) is proved.
When $-f'(0)<E(k)<f'(0)$, we have that
\[
1-\inf\limits_{\lambda\in\mathbb R}\left\{\int_{\mathbb R}k(x)e^{\lambda x}dx\right\}<f'(0).
\]
From \eqref{1.7} and \eqref{1.8} it follows that $c_l^*<0<c_r^*$. Then case (iii) is proved.
Finally, the proofs of cases (iv) and (v) are similar to those of cases (ii) and (i), respectively.
\end{proof}

Combining  Theorem \ref{th1.2} with \eqref{1.2}, we see that   $E(k)$ and  $f'(0)$ can determine the signs of spreading speeds. Moreover, they have three   important  influences on the spatial dynamics of nonlocal dispersal equation \eqref{1.1}.

First, the signs of $c_r^*$ and $c_l^*$ can determine the spatial propagation directions  of solution.
Define a level set function  by
\[
\Sigma_\omega(t)\triangleq\{x\in\mathbb R~|~u(t,x)\geqslant \omega\}~~\text{for any}~\omega\in(0,1),~t>0.
\]
Then when $t$ is large enough, $\Sigma_\omega(t)$ spreads to both the left and the right of the $x$-axis in case (iii), spreads only to the right in case (i), and spreads only to the left in case (v).
However, in  case (ii), if the set $\Sigma_\omega(t)$ is connected, the movement of the left boundary of $\Sigma_\omega(t)$ is slower than linearity and we cannot identify its propagating direction. Similarly, we cannot  identify the propagating direction of  the right boundary of $\Sigma_\omega(t)$ in case (iv) either.

Next, the signs of $c_r^*$ and $c_l^*$  influence the stability of equilibrium states.
In case (iii), the equilibrium state  $u\equiv1$  is globally stable and $u\equiv0$ is  globally unstable in any bounded spatial
region. More precisely, if $u_0(\cdot)\not\equiv0$  and $u_0$ is continuous and nonnegative, then
\[
\text{for any}~x\in\mathbb R,~\lim\limits_{t\rightarrow\infty} u(t,x)=1;
\]
namely, case (iii) has the same stability property as \eqref{1.5} in reaction-diffusion equations.
However, in  case  (i) or (v),  the equilibrium state $u\equiv0$ becomes  stable in any bounded spatial region for  compactly supported initial data, which means that
\[
\text{for any}~x\in\mathbb R,~\lim\limits_{t\rightarrow\infty} u(t,x)=0.
\]
The fundamental reason  of this change is that the asymmetric dispersal plays a more important role than the reaction term, and the spatial region $\Sigma_\omega(t)$   travels in the dominating direction of dispersal (namely, the direction of $\text{sign}(J(k))$).
In addition, it is worth pointing out that the equilibrium state $u\equiv0$ remains to be unstable for  initial data satisfying $u_0(x)\geqslant \epsilon$ with $\epsilon>0$ (see Finkelshtein et al. \cite{FKT2015,FKMT2017}).

Finally, the asymmetry of $k$  affects the monotone property of  solutions.
In the reaction-diffusion equation \eqref{1.4}, there is a well-known result that the solution keeps some symmetry  and monotone property of the initial data; that is, if $u_0(\cdot)$ is symmetric   and decreasing on $\mathbb R^+$, so is  the solution $u(t,\cdot)$ of equation \eqref{1.4} at any time $t>0$. The following theorem shows that this result also holds in the nonlocal dispersal equation.

\begin{theorem}\label{th2.4}
If $k(\cdot)$ and $u_0(\cdot+x_1)$ are symmetric  and decreasing on $\mathbb R^+$ with $x_1\in\mathbb R$, so is  the solution $u(t,\cdot+x_1)$ of equation \eqref{1.1}  at any time $t>0$.
\end{theorem}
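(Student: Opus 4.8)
The plan is to exploit the translation and reflection symmetries of equation \eqref{1.1} together with the comparison principle for this monotone problem. Since the convolution $k*u$ commutes with spatial translations and the equation carries no explicit $x$-dependence, the map $u(t,x)\mapsto u(t,x+x_1)$ sends a solution to the solution with translated initial datum. Hence, writing $\bar u(t,x)=u(t,x+x_1)$, it suffices to treat the case $x_1=0$: assuming $k$ and $u_0$ are even and nonincreasing on $\mathbb R^+$, I would show that $u(t,\cdot)$ inherits both properties for every $t>0$.

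For the symmetry I would set $v(t,x)=u(t,-x)$ and verify that $v$ solves the same Cauchy problem. Using that $k$ is even, a change of variables gives $k*v(t,x)=\int_{\mathbb R}k(x-y)u(t,-y)\,dy=\int_{\mathbb R}k(-x-z)u(t,z)\,dz=k*u(t,-x)$, so $v$ satisfies $v_t=k*v-v+f(v)$ with $v(0,\cdot)=u_0(-\cdot)=u_0$. By uniqueness of bounded solutions of \eqref{1.1} (part of the standard well-posedness and comparison theory), $v\equiv u$, that is $u(t,-x)=u(t,x)$.

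For the monotonicity I would use a reflection (moving-plane) argument adapted to the nonlocal operator. Fix $a>0$ and set $w(t,x)=u(t,x)-u(t,2a-x)$. The reflected function $\tilde u(t,x)=u(t,2a-x)$ again solves \eqref{1.1} (the evenness of $k$ is used exactly as above), so $w$ solves the linear equation $w_t=k*w-w+c(t,x)w$ with $c=(f(u)-f(\tilde u))/(u-\tilde u)$ bounded by $\sup_{[0,1]}|f'|$, and $w$ is antisymmetric about the plane, $w(t,2a-x)=-w(t,x)$. The place where the hypotheses on $k$ enter is the reduction of the nonlocal term to the half-line $\{x>a\}$: folding the integral over $\{y<a\}$ by the antisymmetry gives, for $x>a$, $k*w(t,x)=\int_a^\infty\bigl[k(x-y)-k(x+y-2a)\bigr]w(t,y)\,dy$, and since $k(z)=k(|z|)$ is nonincreasing in $|z|$ while $|x-y|\le (x-a)+(y-a)=x+y-2a$ for $x,y>a$, the kernel $k(x-y)-k(x+y-2a)$ is nonnegative. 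Thus on $\{x>a\}$ the equation for $w$ closes with a nonnegative off-diagonal kernel. Because $u_0$ is even and nonincreasing on $\mathbb R^+$ and $|x|\ge|2a-x|$ for $x\ge a\ge0$, the initial datum satisfies $w(0,x)\le 0$ for $x>a$; the nonlocal maximum principle for this half-line problem (proved, if one wishes, by comparing $w$ with the supersolution $\delta e^{\gamma t}$ with $\gamma=\sup|f'|+1$ and letting $\delta\to0$) then yields $w(t,x)\le0$ for all $t>0$ and $x>a$. Taking $a=(x_1'+x_2')/2$ for arbitrary $0\le x_1'<x_2'$ gives $u(t,x_2')\le u(t,x_1')$, which is the desired monotonicity; combined with the symmetry this completes the argument.

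I expect the main obstacle to be precisely the monotonicity step and, within it, the treatment of the nonlocal operator: unlike the Laplacian, $k*w$ couples the two half-lines, so a naive restriction of the moving-plane method to $\{x>a\}$ fails. The crux is the reflection identity above, which trades the values of $w$ on $\{y<a\}$ for values on $\{y>a\}$ and, thanks to the monotonicity of $k$ on $\mathbb R^+$, produces a nonnegative kernel, exactly the structure that makes the half-line problem order-preserving. A secondary, routine difficulty is that the spatial domain is unbounded and the supremum of $w$ need not be attained, which is why I would realize the maximum principle through an exponential supersolution rather than by evaluating the equation at a maximum point.
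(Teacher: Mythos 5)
Your strategy coincides with the paper's: reduce to $x_1=0$ by translation, obtain the symmetry of $u(t,\cdot)$ from uniqueness applied to the reflected solution, and prove monotonicity by comparing $u$ with a reflection of itself, folding the convolution across the reflection point via the antisymmetry of $w$, and using the monotonicity of $k$ on $\mathbb R^+$ to see that the folded kernel $k(x-y)-k(x+y-2a)$ is nonnegative. All of that is correct and isolates the same crux as the paper, which works with the equivalent function $w(t,x)=u(t,x+2y)-u(t,x)$, antisymmetric about $-y$, and makes exactly the same kernel estimate.

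The gap is in the step you dismiss as routine: the maximum principle on the unbounded half line. Your proposed supersolution $\delta e^{\gamma t}$ is constant in $x$, so the first-touching argument for $w\le \delta e^{\gamma t}$ still has to evaluate the equation at a point where $w(T_1,\cdot)-\delta e^{\gamma T_1}$ attains its supremum $0$; on $\{x>a\}$ such a point need not exist, since the supremum may be approached only as $x\to+\infty$, and then no contradiction is produced. The exponential factor in time only controls the zeroth-order term $c\,w$; it does nothing about escape of the supremum to spatial infinity, which is precisely the difficulty you flagged but did not actually resolve. This is the case the paper spends most of its proof excluding: after the folding estimate it distinguishes Case 1 (supremum attained, handled exactly as you propose) from Case 2 (supremum only approached along $x\to+\infty$), and Case 2 is treated with a genuinely spatially varying barrier $\rho_\sigma(t,x)=\bigl[\tfrac12+\sigma p_0(x)\bigr]\varepsilon e^{Kt}$, where $p_0$ increases from $1$ to $3$ across a point $x_1$ at which $w$ is within $\tfrac14\varepsilon e^{KT_0}$ of its supremum. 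For the minimal admissible $\sigma^*\in[\tfrac14,\tfrac12]$ this barrier strictly dominates $w$ for $x\ge x_1+1$, which forces the touching point of $w-\rho_{\sigma^*}$ into a compact region, where the pointwise contradiction goes through. To complete your argument you need either such a barrier, or a separate proof that touching at infinity cannot occur, e.g. by showing that the solution semigroup preserves the class of bounded functions having limits at $\pm\infty$ (equal limits, by symmetry), so that $w(t,x)\to0$ as $x\to+\infty$ and a positive supremum cannot be approached at infinity. As written, the monotonicity step is incomplete.
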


\begin{proof} By translating the $x$-axis, we suppose that $x_1=0$.
The symmetry property of $u(t,\cdot)$  can be obtained easily. Indeed,  if we consider the following equation
\[
\left\{
\begin{aligned}
&v_t(t,x)= k*v(t,x)-v(t,x)+ f(v(t,x)),~~t>0,~x\in \mathbb R,\\
&v(0,x)=u_0(-x),~x\in\mathbb R,
\end{aligned}
\right.
\]
then $u(t,x)=v(t,x)=u(t,-x)$ for $t\geqslant0$ and $x\in\mathbb R$ by the uniqueness of the solution.
For a fixed number $y\in \mathbb R^+$, we define
\[
w(t,x)=u(t,x+2y)-u(t,x)~~\text{for}~t\geqslant0,~x\in\mathbb R.
\]
The symmetry property of $u(t,\cdot)$   implies that  $w(t, -y)=0$ for $t\geqslant0$ and
\begin{equation}\label{5.2}
w(t,x)=-w(t,-x-2y)~~\text{for}~t\geqslant0,~x\in\mathbb R.
\end{equation}
Since  $u_0(\cdot)$ is  symmetric   and  decreasing on $\mathbb R^+$, we have
\[
w(0,x)\leqslant 0~\text{for}~x>-y,~~w(0,x)\geqslant 0~\text{for}~x<-y.
\]

In order to prove that $u(t,\cdot)$ is  decreasing on $\mathbb R^+$ for any $t>0$, we try to prove the following conclusion
\begin{equation}\label{5.5}
w(t,x)\leqslant 0~\text{for}~t>0,~x>-y.
\end{equation}
Indeed, if \eqref{5.5} holds, then we have that $u(t,x+2y)\leqslant u(t,x)$ for $x>-y$ and $y\in\mathbb R^+$ at any time $t>0$, which means that $u(t,\cdot)$ is  decreasing on $\mathbb R^+$.

Now we begin to prove \eqref{5.5}.
Since $f(u)\in C^1([0,1])$, there is a constant $M>0$ such that
\begin{equation}\label{5.6}
\begin{aligned}
w_t(t,x)&=k*w(t,x)-w(t,x)+f(u(t,x+2y))-f(u(t,x))\\
&\leqslant k*w(t,x)-w(t,x)+Mw(t,x) ~~~~\text{for}~t>0,~x\in\mathbb R.
\end{aligned}
\end{equation}
Suppose by contradiction that \eqref{5.5} does not hold and there exist two constants $T_0>0$ and $\varepsilon>0$ such that
\begin{equation}\label{5.7}
\sup\limits_{x>-y}\{w(T_0,x)\}=\varepsilon e^{KT_0}~~\text{and}~w(t,x)<\varepsilon e^{Kt}~\text{for}~t\in(0,T_0),~x>-y,
\end{equation}
where $K>\max\{M+1,~\frac{8}{3}M+\frac{4}{3}\}$.
Under \eqref{5.7} we give an estimate for the nonlocal dispersal term  $k*w(t,x)-w(t,x)$.
From  \eqref{5.2}, it follows that for $t\in(0,T_0]$ and $x>-y$,
\[
\begin{aligned}
&k*w(t,x)-w(t,x)\\
&\quad\quad =\int_{-y}^{+\infty}[w(t,z)-w(t,x)]k(x-z)dz+\int_{-\infty}^{-y}[w(t,z)-w(t,x)]k(x-z)dz\\
&\quad\quad =\int_{-y}^{+\infty}Q(t,x,z,y)dz\\
&\quad\quad =\int_{\Sigma_1(t)}Q(t,x,z,y)dz+\int_{\Sigma_2(t)}Q(t,x,z,y)dz,
\end{aligned}
\]
where
\[
Q(t,x,z,y)
=[w(t,z)-w(t,x)]k(x-z)-[w(t,z)+w(t,x)]k(x+z+2y)
\]
and
\[
\Sigma_1(t)=\big\{z~|~w(t,z)>0,~z>-y\big\},~~\Sigma_2(t)=\big\{z~|~w(t,z)\leqslant0,~z>-y\big\}.
\]
We also suppose that $w(t,x)\geqslant 0$ in this estimation. When $z\in\Sigma_1(t)$, we can get from \eqref{5.7} that $w(t,z)-w(t,x)\leqslant \varepsilon e^{Kt}$. Then it follows that
\[
\int_{\Sigma_1(t)}Q(t,x,z,y)dz\leqslant\int_{\Sigma_1(t)}\varepsilon e^{Kt}k(x-z)dz\leqslant \varepsilon e^{Kt}~~\text{for}~t\in(0,T_0],~x>-y.
\]
When $z\in\Sigma_2(t)$, we rewrite $Q(t,x,z,y)$ as
\[
Q(t,x,z,y)
=w(t,z)[k(x-z)-k(x+z+2y)]-w(t,x)[k(x-z)+k(x+z+2y)].
\]
Since $k(\cdot)$ is  symmetric and decreasing on $\mathbb R^+$, we have that $k(x-z)-k(x+z+2y)\geqslant 0$ when $x>-y$ and $z>-y$. Then it follows that
\[
\int_{\Sigma_2(t)}Q(t,x,z,y)dz\leqslant 0~~\text{for}~t\in(0,T_0],~x>-y.
\]
Therefore, when $w(t,x)\geqslant 0$, we have  that
\begin{equation}\label{5.8}
k*w(t,x)-w(t,x)\leqslant \varepsilon e^{Kt}~~\text{for}~t\in(0,T_0],~x>-y.
\end{equation}

Next we return to the proof of  \eqref{5.5}.  From \eqref{5.7}, the continuity property of $w(T_0,\cdot)$ implies that one or both of the following two cases must happen.
\begin{description}
  \item[Case 1] There exists $x_0\in(-y,+\infty)$ such that $w(T_0,x_0)=\sup\limits_{x>-y}\left\{w(T_0,x)\right\}=\varepsilon e^{KT_0}$.
  \item[Case 2] It holds that $\limsup\limits_{x\rightarrow+\infty}\{w( T_0,x)\}=\varepsilon e^{KT_0}$.
\end{description}

If case 1 holds,   from  \eqref{5.7} we have
\[
\left.\frac{\partial}{\partial t}\left(w(t,x_0)-\varepsilon e^{Kt}\right)\right|_{t=T_0}\geqslant 0,
\]
which implies
\begin{equation}\label{5.9}
w_t(T_0,x_0)\geqslant\varepsilon K e^{KT_0}.
\end{equation}
From \eqref{5.8} and \eqref{5.9}, it follows that
\[
w_t(T_0,x_0)-k*w(T_0,x_0)+w(T_0,x_0)-Mw(T_0,x_0)\geqslant (K-1-M)\varepsilon  e^{KT_0}>0,
\]
which contradicts \eqref{5.6}.

If case 2 holds, then there exists a constant number $x_1>-y$ (far away from $-y$) such that  $w(T_0,x_1)>\frac{3}{4}\varepsilon e^{KT_0}$. Let $p_0(x)$ be a smooth and  increasing function satisfying that
\[
p_0(x)=\left\{
\begin{aligned}
&1~~~\text{for}~x\leqslant x_1,\\
&3~~~\text{for}~x\geqslant x_1+1.
\end{aligned}
\right.
\]
For $\sigma>0$, we define
\[
\rho_{\sigma}(t,x)= \left[\frac{1}{2}+\sigma p_0(x)\right]\varepsilon e^{Kt}~\text{for}~t\in[0,T_0],~x\in\mathbb R
\]
and
\[
\sigma^*=\inf\Big\{\sigma>0~|~w(t,x)-\rho_{\sigma}(t,x)\leqslant0 ~~\text{for}~t\in[0,T_0],~x>-y \Big\}.
\]
From \eqref{5.7}, some simple calculations yield  that $\rho_\frac{1}{2}(t,x)\geqslant\varepsilon e^{Kt}\geqslant w(t,x)$ for $t\in[0,T_0]$ and $x>-y$ and
$\rho_\frac{1}{4}(T_0,x_1)=\frac{3}{4}\varepsilon e^{KT_0}<w(T_0,x_1)$.
Then we have that $\frac{1}{4}\leqslant\sigma^*\leqslant\frac{1}{2}$ and
\[
\rho_{\sigma^*}(t,x)\geqslant\frac{5}{4}\varepsilon e^{Kt}>w(t,x)~~\text{for}~t\in[0,T_0],~x\geqslant x_1+1.
\]
From the definition of $\sigma^*$,  there must exist $T_1\in(0,T_0]$ and $x_2\in(-y,x_1+1)$ such that
\[
w(T_1,x_2)-\rho_{\sigma^*}(T_1,x_2)=\sup\limits_{t\in[0,T_0],~x>-y}\big\{w(t,x)-\rho_{\sigma^*}(t,x)\big\}=0,
\]
which implies that
\[
\frac{\partial}{\partial t}\Big(w(t,x_2)-\rho_{\sigma^*}(t,x_2)\Big)\bigg|_{t=T_1}\geqslant 0.
\]
Since $\frac{1}{4}\leqslant\sigma^*\leqslant\frac{1}{2}$, we have
\begin{equation}\label{5.10}
w(T_1,x_2)=\rho_{\sigma^*}(T_1,x_2)\leqslant\rho_{\frac{1}{2}}(T_1,x_2)\leqslant 2\varepsilon e^{KT_1}
\end{equation}
and
\begin{equation}\label{5.11}
w_t(T_1,x_2)\geqslant \frac{\partial}{\partial t}\rho_{\sigma^*}(T_1,x_2)=K\rho_{\sigma^*}(T_1,x_2)\geqslant K\rho_{\frac{1}{4}}(T_1,x_2)\geqslant \frac{3}{4}K\varepsilon e^{KT_1}.
\end{equation}
From \eqref{5.8}, \eqref{5.10} and \eqref{5.11}, we can get that
\[
w_t(T_1,x_2)-k*w(T_1,x_2)+w(T_1,x_2)-Mw(T_1,x_2)\geqslant (\frac{3}{4}K-1-2M)\varepsilon  e^{KT_1}>0,
\]
which contradicts \eqref{5.6}.

Finally, we get \eqref{5.5}  and the proof of Theorem \ref{th2.4} is finished.
\end{proof}

However, when $k$ is  asymmetric,   Theorem \ref{th2.4} does not hold  even if $k$ has an adequate monotone property. For example, in  case (i) or (v),  the spatial point where the solution attains  its maximum value keeps moving at a speed between $c_l^*$ and $c_r^*$. We also point out that   Theorem \ref{th2.4} is useful in Remark \ref{re3.5} and  the proof of Theorem \ref{th2.3}.

Recently, we \cite{XLR2019} further study the relationship between the signs of spreading speeds and  the asymmetric dispersals of infectious agents and infectious humans in an epidemic model, where the infectious agents are brought by migratory birds. We find it possible that the epidemic spreads only
along the flight route of migratory birds and the spatial propagation against the flight route
fails, as long as the infectious humans are kept from moving frequently.

\begin{remark}\rm
In reaction-diffusion equation \eqref{1.4}, the proof of  the same  conclusion as Theorem \ref{th2.4} is easier than that in  nonlocal dispersal equation. Indeed, we can prove \eqref{5.5} by the maximum principle of equation $w_t(t,x)= \Delta w(t,x)+Mw(t,x)$ with $(t,x)\in[0,+\infty)\times[-y,+\infty)$.
\end{remark}

\section{Improved  proof of spreading speeds}
\noindent

In this section,   we give an improved  proof of the  spreading speed  result in equation \eqref{1.1} by constructing  new lower solutions and applying  the ``forward-backward spreading'' method.
First, we  state the comparison principle  (see e.g. \cite{Chen2002,CDM2008}).

\begin{lemma}[Comparison principle]\label{th3.1}
Suppose the bounded continuous functions $\bar u(t,x)$ and $\underline u(t,x)$ are the upper and lower solutions of equation \eqref{1.1} for $t\in(0,T]$, in the sense that
\[
\bar u_t-k*\bar u+u- f(\bar u)\geqslant0 \geqslant \underline u_t-k*\underline u+\underline u- f(\underline u)~~\text{for}~t\in(0,T],~x\in\mathbb R.\\
\]
If $\bar u(0,x)\geqslant \underline u(0,x)$ for $x\in \mathbb R$, then $\bar u(t,x)\geqslant \underline u(t,x)$ for $t\in[0,T]$ and $x\in \mathbb R$.
\end{lemma}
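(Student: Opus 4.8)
The plan is to reduce the assertion $\bar u\ge\underline u$ to a linear differential inequality for the difference and then run a maximum-principle argument adapted to the non-compact spatial domain. First I would set $w(t,x)=\bar u(t,x)-\underline u(t,x)$. Subtracting the two defining inequalities for the upper and lower solutions gives, for $t\in(0,T]$ and $x\in\mathbb R$,
\[
w_t(t,x)\ge k*w(t,x)-w(t,x)+\big[f(\bar u(t,x))-f(\underline u(t,x))\big],
\]
together with $w(0,x)\ge0$. Since $\bar u$ and $\underline u$ are bounded and $f\in C^1$, the mean value theorem lets me write $f(\bar u)-f(\underline u)=g(t,x)\,w$ with $\sup_{t,x}|g(t,x)|\le L$ for some $L>0$, so that $w_t\ge k*w+(g-1)w$.

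To remove the (possibly negative) zeroth-order coefficient I would set $W(t,x)=e^{\beta t}w(t,x)$ with $\beta=L+1$, and write $\mathcal L_0 h:=k*h-h$ for the nonlocal operator. A direct computation transforms the inequality into
\[
W_t(t,x)\ge \mathcal L_0 W(t,x)+\big(1+b(t,x)\big)W(t,x),
\]
where $b=\beta-1+g\ge\beta-1-L=0$ is bounded and nonnegative, and $W(0,\cdot)\ge0$. It then suffices to prove $W\ge0$, equivalently $N:=-W\le0$, which satisfies $N_t-\mathcal L_0 N\le(1+b)N$ and $N(0,\cdot)\le0$.

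The only genuine obstacle is that $\mathbb R$ is not compact, so $\sup_x N(t,x)$ need not be attained and a pointwise maximum-principle argument cannot be invoked directly. I would circumvent this using the semigroup $S(t)=e^{t\mathcal L_0}$, which is positivity preserving and fixes constants (since $k\ge0$ and $\int_{\mathbb R}k=1$), whence $\sup_x S(t)h\le\sup_x h$ for every bounded $h$. Applying Duhamel's formula to the differential inequality and using the positivity of $S$ to preserve the inequality direction yields
\[
N(t,\cdot)\le S(t)N(0,\cdot)+\int_0^t S(t-s)\big[(1+b)N\big](s,\cdot)\,ds.
\]
Writing $\Phi(s):=\sup_x N(s,x)$ and $B:=\sup b$, taking $\sup_x$, and using $\sup_x S(t)N(0,\cdot)\le\sup_x N(0,\cdot)\le0$ together with $\sup_x(1+b)N(s,\cdot)\le(1+B)\,\Phi(s)_+$, I obtain $\Phi(t)_+\le(1+B)\int_0^t\Phi(s)_+\,ds$. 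Gronwall's inequality then forces $\Phi(t)_+\equiv0$, i.e. $N\le0$ and hence $\bar u\ge\underline u$ on $[0,T]\times\mathbb R$. I expect the delicate point to be the rigorous justification of the Duhamel comparison on the unbounded domain (and the measurability of $\Phi_+$); an alternative that avoids the semigroup is a penalization argument in the spirit of the proof of Theorem \ref{th2.4}, perturbing $W$ by $\eta e^{\mu t}$ and examining the first point where the perturbed difference touches zero, where the sign of the nonlocal term $k*W-W$ produces the required contradiction.
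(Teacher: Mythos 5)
The paper does not actually prove Lemma \ref{th3.1}: it is stated as a known result with a pointer to the literature (\cite{Chen2002,CDM2008}), so there is no internal proof to compare against. Judged on its own merits, your argument is essentially correct and is one of the two standard routes. The reduction $w=\bar u-\underline u$, the mean value theorem (legitimate here since both functions take values where $f\in C^1$ applies, and both are bounded), the exponential tilt $W=e^{\beta t}w$ chosen precisely so that the zeroth-order coefficient $1+b$ becomes nonnegative, and the Duhamel--Gronwall closure all fit together: the nonnegativity of $1+b$ is exactly what justifies $(1+b)N\leqslant(1+B)N_{+}$ pointwise, and the positivity and constant-preservation of $S(t)=e^{t(k*\cdot-I)}=e^{-t}e^{tK}$ (valid because $k\geqslant0$, $\int k=1$) give $\sup_x S(t)h\leqslant\sup_x h$, so the Gronwall loop closes; note that without the tilt this step genuinely fails, since $cN\leqslant|c|N_{+}$ is false where $c<0$ and $N<0$. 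The one place that needs more than you wrote is the Duhamel formula itself: the hypotheses give only a pointwise differential inequality, whereas variation of constants for the semigroup requires $s\mapsto N(s,\cdot)$ to be differentiable as a map into $C_b(\mathbb R)$ (or an equivalent justification by differentiating $s\mapsto S(t-s)N(s,\cdot)$ under the series $S(t)=e^{-t}\sum_n t^nK^n/n!$ via dominated convergence, using that $|N_t|$ is bounded). Since $k*\cdot-I$ is a bounded operator this is repairable, but it should be carried out rather than asserted. Your fallback — the penalization/touching argument perturbing by $\eta e^{\mu t}$ — is in fact the approach the paper itself uses in detail for the analogous comparison-type statement, Theorem \ref{th2.4}, and would yield a proof more self-contained and closer in spirit to the paper; either route is acceptable.
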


In the construction of the new lower solutions, we need an auxiliary function and  some of its properties as stated in the following lemma.

\begin{lemma}\label{th3.4}
For any $\delta\in(0,1)$,    define
\[
H(z)=Az-Bz^{1+\delta}-Dz^{1-\delta}~~\text{for}~z>0.
\]
For any given $A>0$ and $D>0$,  we have the following conclusions
\[
\begin{aligned}
&H^{\rm{max}}>0~~\text{for}~B\in\left(0,A^2/(4D)\right),\\
&H^{\rm{max}}\rightarrow 0^+,~\nu-\mu\rightarrow0^+~~\text{as}~B-A^2/(4D)\rightarrow 0^-,
\end{aligned}
\]
where
\[
\begin{aligned}
&H^{\rm{max}}\triangleq\sup\limits_{z>0}\big\{H(z)\big\}=H(z_0)~~\text{for some}~z_0\in(\mu,\nu),\\
&(\mu,\nu)\triangleq\big\{z>0~|~H(z)>0\big\}~~\text{for}~B\in\left(0, A^2/(4D)\right).
\end{aligned}
\]
Moreover, for any  $p>0$, there exists $B(p)\in\left(0, A^2/(4D)\right)$ such that
\[
H^{\rm{max}}=p~\text{and}~B(p)\rightarrow  A^2/(4D)~\text{as}~p\rightarrow0^+.
\]
\end{lemma}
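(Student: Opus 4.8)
The plan is to reduce everything to a quadratic in the variable $w = z^\delta$. Writing $H(z) = z\,(A - Bz^\delta - Dz^{-\delta})$ and noting that $z \mapsto z^\delta$ is an increasing bijection of $(0,\infty)$ onto itself, the sign of $H(z)$ coincides with the sign of $A - Bw - D/w$, i.e. with the sign of $-(Bw^2 - Aw + D)$ for $w>0$. For $B \in (0, A^2/(4D))$ the discriminant $A^2 - 4BD$ is positive and $Bw^2 - Aw + D$ has two roots $0 < w_1 < w_2$ (both positive, since their product $D/B$ and sum $A/B$ are positive). Hence $H > 0$ exactly on $(\mu,\nu)$ with $\mu = w_1^{1/\delta}$ and $\nu = w_2^{1/\delta}$. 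Since $H$ is continuous on $(0,\infty)$ with $H(z) \to 0$ as $z \to 0^+$ and $H(z) \to -\infty$ as $z \to +\infty$, and since $H = 0$ at the endpoints $\mu,\nu$ while $H > 0$ in between, the supremum is positive and attained at an interior point $z_0 \in (\mu,\nu)$. This gives $H^{\mathrm{max}} > 0$ and the existence of $z_0$.

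For the limit $B \to (A^2/(4D))^-$ I would use the factored form. From the two roots one obtains, with $w = z^\delta$, the identity $H(z) = -B\,w^{1/\delta - 1}(w - w_1)(w - w_2)$, so on $[\mu,\nu]$ one has
\[
0 < H(z) \le B\,C\,(w - w_1)(w_2 - w) \le \tfrac{B\,C}{4}(w_2 - w_1)^2,
\]
where $C = \max\{w_1^{1/\delta-1}, w_2^{1/\delta-1}\}$ bounds $w^{1/\delta-1}$ on the compact interval $[w_1,w_2]$. Because $w_1, w_2 \to 2D/A$ and $w_2 - w_1 = \sqrt{A^2 - 4BD}/B \to 0$ as $B \to (A^2/(4D))^-$, this bound forces $H^{\mathrm{max}} \to 0^+$; the same convergence of the roots, together with continuity of $w \mapsto w^{1/\delta}$, gives $\nu - \mu = w_2^{1/\delta} - w_1^{1/\delta} \to 0^+$.

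Finally, for the surjectivity statement I would study the map $B \mapsto H^{\mathrm{max}}(B)$ on $(0, A^2/(4D))$. Strict monotonicity is immediate: for fixed $z$, $\partial H/\partial B = -z^{1+\delta} < 0$, so for $B_1 < B_2$ one has $H^{\mathrm{max}}(B_2) = H(z_0(B_2);B_2) < H(z_0(B_2);B_1) \le H^{\mathrm{max}}(B_1)$, whence $H^{\mathrm{max}}$ is strictly decreasing. It tends to $0^+$ as $B \to (A^2/(4D))^-$ by the previous paragraph, and to $+\infty$ as $B \to 0^+$ (for any fixed large $z$, $H(z;B) \to Az - Dz^{1-\delta}$, which is unbounded in $z$ since $1 > 1-\delta$). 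Continuity of $H^{\mathrm{max}}$ in $B$ follows from the continuous, compact-valued dependence of the maximization domain $[\mu(B),\nu(B)]$ on $B$ (Berge's maximum theorem, or a direct subsequence argument using the joint continuity of $H$). By the intermediate value theorem and strict monotonicity, for each $p>0$ there is a unique $B(p)$ with $H^{\mathrm{max}}(B(p)) = p$, and $B(p) \to A^2/(4D)$ as $p \to 0^+$.

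I expect the continuity (equivalently, the precise limiting behaviour) of the value function $H^{\mathrm{max}}(B)$ to be the one point requiring care; the factored bound displayed above is the device that makes both $H^{\mathrm{max}} \to 0^+$ and the continuity argument go through cleanly, while strict monotonicity in $B$ upgrades the intermediate value theorem to give a well-defined $B(p)$ with the desired limit.
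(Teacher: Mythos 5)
Your proof is correct, but it runs along a genuinely different track from the paper's. The paper differentiates $h(z,B)=Az-Bz^{1+\delta}-Dz^{1-\delta}$ in $z$, solves $\partial_z h=0$ explicitly (a quadratic in $z^{\delta}$ with discriminant $A^2-4BD(1-\delta^2)$, giving the maximizer $z_0$ in closed form), and then studies the value function $g(B)=h(z_0,B)$ through the envelope identity $g'(B)=-z_0^{1+\delta}<0$: strict monotonicity and continuity of $H^{\rm{max}}$ come from this derivative, the limit $H^{\rm{max}}\rightarrow 0^+$ comes from the exact evaluation $g\left(A^2/(4D)\right)=0$ (which requires $g$ to be defined on the larger interval $\left(0,\,A^2/(4D(1-\delta^2))\right]$), and $g(B)\rightarrow+\infty$ comes from the test point $z=B^{-1/(1+\delta)}$. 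You never touch critical points: you work with the zero-set quadratic $Bw^2-Aw+D$, $w=z^{\delta}$, and the factorization $H=Bw^{1/\delta-1}(w-w_1)(w_2-w)$ on $[\mu,\nu]$, whose AM--GM consequence $H^{\rm{max}}\leqslant \tfrac{1}{4}BC(w_2-w_1)^2$ gives the key limit $H^{\rm{max}}\rightarrow0^+$ quantitatively; monotonicity in $B$ is the pointwise inequality $\partial H/\partial B=-z^{1+\delta}<0$, and $H^{\rm{max}}\rightarrow+\infty$ uses a fixed large $z$ as $B\rightarrow0^+$. What your route buys: you avoid both the closed-form maximizer and the mildly delicate extension of $g$ past $A^2/(4D)$, and your monotonicity argument needs no differentiability of the value function. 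What it costs: the continuity of $B\mapsto H^{\rm{max}}(B)$, which the paper gets for free from $g'$, must be supplied by Berge's theorem or the subsequence argument you only sketch; that argument does go through (since $\mu(B)$ increases and $\nu(B)$ decreases in $B$, the maximization is locally over a fixed compact set on which $H$ is jointly continuous), but it is the one step you should write out rather than cite. The conclusions $\nu-\mu\rightarrow0^+$ and the intermediate-value construction of $B(p)$ are essentially identical in the two proofs.
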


\begin{proof}
For any given $A>0$, $D>0$ and $\delta\in(0,1)$,   define
\[
h(z,B)=Az-Bz^{1+\delta}-Dz^{1-\delta}~~~\text{for}~z>0,~0<B\leqslant\frac{A^2}{4D(1-\delta^2)}.
\]
Let $z_0$ and  $z_1$ be two positive numbers given by
\[
z_0^{\delta}=\frac{A+\sqrt{A^2-4BD(1-\delta^2)}}{2B(1+\delta)}~~\text{and}~~
z_1^{\delta}=\frac{A-\sqrt{A^2-4BD(1-\delta^2)}}{2B(1+\delta)}.
\]
A simple  calculation  implies that
\[
\frac{\partial}{ \partial z}h(z,B)\left\{
\begin{aligned}
&=0~~\text{for}~z=z_1~\text{and}~z=z_0,\\
&<0~~\text{for}~z\in (0,z_1)\cup(z_0,+\infty),\\
&>0~~\text{for}~z\in(z_1,z_0).
\end{aligned}
\right.
\]
Therefore, we have  $H^{\rm{max}}=\max\{0,h(z_0,B)\}$. Define
\[
g(B)=h(z_0,B)=Az_0-Bz_0^{1+\delta}-Dz_0^{1-\delta}~~~\text{for}~0<B\leqslant\frac{A^2}{4D(1-\delta^2)}.
\]
Then it follows that $g'(B)=-z_0^{1+\delta}<0$. Notice that
\begin{equation}\label{5.1}
g(B)=0~\text{when}~B=A^2/(4D).
\end{equation}
The continuity and monotone property of $g(\cdot)$ show  that
\[
H^{\rm{max}}=g(B)>0~~\text{for}~0<B<A^2/(4D)~~\text{and}~~g(B)\rightarrow 0^+~~\text{as}~B-A^2/(4D)\rightarrow 0^-.
\]
When  $0<B<A^2/(4D)$, by $(\mu,\nu)=\big\{z>0~|~H(z)>0\big\}$ we get
\[
\mu=\Big[\frac{A-\sqrt{A^2-4BD}}{2B}\Big]^{\frac{1}{\delta}}~~\text{and}~~\nu=\Big[\frac{A+\sqrt{A^2-4BD}}{2B}\Big]^{\frac{1}{\delta}}
\]
and
\[
\nu-\mu\rightarrow 0^+~\text{as}~B-A^2/(4D)\rightarrow 0^-.
\]
Moreover, a simple calculation shows that
\[
g(B)=h(z_0,B)\geqslant h\big(B^{-\frac{1}{1+\delta}},B\big)=B^{-\frac{1}{1+\delta}}\big[A-DB^{\frac{\delta}{1+\delta}}\big]-1,
\]
which implies that $g(B)\rightarrow+\infty$ as $B\rightarrow 0$.
By \eqref{5.1} and the continuity   of $g(\cdot)$,  for any  $p>0$ there exists  $B(p)\in (0, A^2/(4D))$ such that  $H^{\rm{max}}=g(B(p))=p$ and $B(p)\rightarrow  A^2/(4D)$ as $p\rightarrow0^+$.
\end{proof}

Now for small $\eta>0$, define
\[
c_\eta(\lambda)=\lambda^{-1}\Big[\int_{\mathbb R}k(x)e^{\lambda x}dx-1+f'(0)-\eta\Big] ~~\text{for}~\lambda\in(\lambda^-,0)\cup(0,\lambda^+),
\]
and
\[
c_r^*(\eta)=\inf\limits_{\lambda\in(0,\lambda^+)}\{c_\eta(\lambda)\}~\text{and}~c_l^*(\eta)=\sup\limits_{\lambda\in(\lambda^-,0)}\{c_\eta(\lambda)\}~~
\text{for}~\eta\in(0,f'(0)).
\]
It follows that $c_l^*<c_l^*(\eta)<c_r^*(\eta)<c_r^*$ and $c_l^*(\eta)\rightarrow c_l^*$, $c_r^*(\eta)\rightarrow c_r^*$ as $\eta\rightarrow0$.
Then for any small $\epsilon>0$, we can choose two   constants $\eta_1$ and $\eta_2$  in $(0,f'(0))$ such that
\begin{equation}\label{5.12}
c_r^*(\eta_1)=c_r^*-\epsilon,~~c_l^*(\eta_2)=c_l^*+\epsilon.
\end{equation}
For any $\eta\in(0,f'(0))$,  define
\begin{equation}\label{5.92}
G_\eta(c,\lambda)=c\lambda-\int_{\mathbb R}k(x)e^{\lambda x}dx+1-f'(0)+\eta~~~\text{for}~~c\in(c_l^*,c_r^*)~\text{and}~\lambda\in(\lambda^-,\lambda^+).
\end{equation}
It is easy to check that
\begin{equation}\label{5.79}
G_\eta(c,0)<0~\text{and}~\frac{\partial^2}{\partial\lambda^2}G_\eta(c,\lambda)<0.
\end{equation}
From  \eqref{1.9} and \eqref{1.10}, it follows that
\begin{equation}\label{5.80}
\lim\limits_{\lambda\rightarrow\lambda^{\pm}}G_\eta(c,\lambda)=-\infty.
\end{equation}

Consider the case   $\eta=\eta_1$.
For   $\lambda\in(0,\lambda^+)$ and  $c_1\in(c_r^*-\epsilon,c_r^*)$, we have
\[
G_{\eta_1}(c_1,\lambda)=[c_1-c_{\eta_1}(\lambda)]\lambda=[c_1-c_r^*(\eta_1)]\lambda+[c_r^*(\eta_1)-c_{\eta_1}(\lambda)]\lambda.
\]
Using the same argument as in the proof of Lemma \ref{th2.99}, we can find some constant $\lambda\in(0,\lambda^+)$ such that $c_r^*(\eta_1)=c_{\eta_1}(\lambda)$.
Therefore, for any $c_1\in(c_r^*-\epsilon,c_r^*)$ there is some constant $\lambda\in(0,\lambda^+)$ such that $G_{\eta_1}(c_1,\lambda)>0$.
By \eqref{5.79} and \eqref{5.80}, for any $c_1\in(c_r^*-\epsilon,c_r^*)$, there are three constants $\alpha^+(c_1)$, $\beta^+(c_1)$ and $\gamma^+(c_1)$  in $(0,\lambda^+)$ such that $\alpha^+(c_1)<\gamma^+(c_1)<\beta^+(c_1)$ and
\begin{equation}\label{5.13}
G_{\eta_1}(c_1,\alpha^+(c_1))=G_{\eta_1}(c_1,\beta^+(c_1))=0~\text{and}~
G_{\eta_1}(c_1,\gamma^+(c_1))>0.
\end{equation}
Moreover, we have that $G_{\eta_1}(c_1,\lambda)>0$ for $\lambda\in(\alpha^+(c_1),\beta^+(c_1))$.

Similarly,  when considering the case  $\eta=\eta_2$, for any $c_2\in(c_l^*,c_l^*+\epsilon)$ we can find three constants $\alpha^-(c_2)$, $\beta^-(c_2)$ and $\gamma^-(c_2)$ in $(\lambda^-,0)$  such that  $\beta^-(c_2)<\gamma^-(c_2)<\alpha^-(c_2)$ and
\begin{equation}\label{5.91}
G_{\eta_2}(c_2,\alpha^-(c_2))=G_{\eta_2}(c_2,\beta^-(c_2))=0~\text{and}~G_{\eta_2}(c_2,\gamma^-(c_2))>0
\end{equation}
Moreover, it follows that $G_{\eta_2}(c_2,\lambda)>0$ for $\lambda\in(\beta^-(c_2),\alpha^-(c_2))$.

The following theorem and its proof are the main results of this section.

\begin{theorem}[Spreading speeds]\label{th2.1}
Suppose that the assumptions {\rm(H)}, {\rm(K1)} and {\rm{(K2)}} hold. If $u_0(\cdot)$ satisfies  that  $0\leqslant u_0(x)\leqslant 1$ for $x\in\mathbb R$, $u_0(x_1)>0$~for some $x_1\in\mathbb R$ and
\begin{equation}
u_0(x)e^{\lambda_l^*x}\leqslant  \Gamma~\text{for}~x \leqslant -x_0,~~~u_0(x)e^{\lambda_r^*x}\leqslant  \Gamma~\text{for}~x \geqslant x_0,
\end{equation}
where $x_0$ and $\Gamma$ are two positive constants, then for any small $\epsilon>0$, there is a constant $p\in(0,1)$ such that the solution $u(t,x)$ of equation \eqref{1.1} has the following  properties:
\begin{equation}\label{1.6}
\left\{\begin{aligned}
&\lim\limits_{t\rightarrow+\infty}~\sup\limits_{x\leqslant (c_l^*-\epsilon)t}u(t, x)=0,\\
&\inf\limits_{(c_l^*+\epsilon)t \leqslant x-x_1\leqslant(c_r^*-\epsilon)t} u(t, x) \geqslant p~~~\text{for any}~~t>0,\\
&\lim\limits_{t\rightarrow+\infty}\sup\limits_{x\geqslant (c_r^*+\epsilon)t}u(t, x)=0.
\end{aligned}
\right.
\end{equation}
\end{theorem}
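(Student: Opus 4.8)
The plan is to handle the two outer limits in \eqref{1.6} by exponential super-solutions and the central lower bound by the new lower solutions \eqref{1.3} glued by the forward--backward method, with the comparison principle (Lemma \ref{th3.1}) tying everything together.

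For the outer estimates I would start from the right. Fix $\lambda_r^*$ from Lemma \ref{th2.99} and set $\bar u(t,x)=\min\{1,\Gamma e^{-\lambda_r^*(x-x_0-(c_r^*+\epsilon/2)t)}\}$. A direct substitution, using $k\ast(\Gamma e^{-\lambda_r^*(\cdot-x_0-ct)})=\Gamma e^{-\lambda_r^*(\cdot-x_0-ct)}\int_{\mathbb R}k(s)e^{\lambda_r^* s}ds$, shows that the exponential factor is a super-solution of the linearized equation precisely when $c\ge c(\lambda_r^*)=c_r^*$; since (H) gives $f(u)\le f'(0)u$ and the constant $1$ is a stationary solution, $\bar u$ is a super-solution of the full equation \eqref{1.1} (the minimum of two super-solutions is again one). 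The hypothesis $u_0(x)e^{\lambda_r^* x}\le\Gamma$ for $x\ge x_0$ together with $0\le u_0\le1$ gives $u_0\le\bar u(0,\cdot)$, so Lemma \ref{th3.1} yields $u\le\bar u$; on $x\ge(c_r^*+\epsilon)t$ this forces $u\le \Gamma e^{\lambda_r^* x_0}e^{-\lambda_r^*(\epsilon/2)t}\to0$, which is the third limit. The first limit follows symmetrically with $\lambda_l^*$.

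The heart of the argument, and the step I expect to be the main obstacle, is verifying that $\underline u_1$ and $\underline u_2$ are genuine lower solutions with no extra assumptions on $k$. For $i=1,2$ I would take $\rho_i=\gamma^{\pm}(c_i)$ and choose $\delta_i>0$ so small that $\rho_i(1\pm\delta_i)$ remain in the interval where $G_{\eta_i}(c_i,\cdot)>0$ (cf. \eqref{5.13} and \eqref{5.91}). Substituting $\underline u_i=H_i(e^{\rho_i(-x+c_it+\xi_i)})$ into the operator, and using $\underline u_i\ge H_i(e^{\rho_i(\cdot)})$ pointwise with $k\ge0$ to replace $k\ast\underline u_i$ by the convolution of the untruncated profile, the subsolution inequality reduces on the support $(\mu_i,\nu_i)$ to a pointwise estimate of the shape
\[
G_{\eta_i}(c_i,\rho_i)A_iz\le G_{\eta_i}(c_i,\rho_i(1+\delta_i))B_iz^{1+\delta_i}+G_{\eta_i}(c_i,\rho_i(1-\delta_i))D_iz^{1-\delta_i}.
\]
This is exactly where Lemma \ref{th3.4} enters: I would fix $A_i,D_i$, invoke Lemma \ref{th3.4} to select $B_i$ making $H_i^{\mathrm{max}}=p$ as small as I wish, and then check the displayed inequality on $(\mu_i,\nu_i)$ using the concavity of $G_{\eta_i}(c_i,\cdot)$. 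The smallness of $p$ is indispensable, since it guarantees $\underline u_i\le p$, and because $f\in C^1$ with $f'(0)>0$ one then has $f(\underline u_i)\ge(f'(0)-\eta_i)\underline u_i$ on $[0,p]$ — precisely the linear rate encoded in $G_{\eta_i}$ — so the estimate closes without any structural hypothesis on the kernel.

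Finally I would slip a lower solution under $u$ and run the forward--backward gluing. Since $u_0\not\equiv0$, a standard principal-eigenvalue sub-solution on a large interval produces $T_0>0$ and $p\in(0,1)$ with $u(T_0,\cdot)\ge p$ on an interval at least as long as the fixed width of $\mathrm{supp}\,\underline u_i(0,\cdot)$; choosing the shift $\xi_i$ to place the bump there and applying Lemma \ref{th3.1} gives $u(T_0+t,x)\ge\underline u_i(t,x)$. Then for any target speed $\bar c\in[c_l^*+\epsilon,\,c_r^*-\epsilon]\subset[c_2,c_1]$ and large $\tau$, I would write $\bar c=\kappa c_1+(1-\kappa)c_2$ with $\kappa\in[0,1]$, propagate $\underline u_1$ (rightward, speed $c_1$) on $[0,\kappa\tau]$, and then $\underline u_2$ (leftward, speed $c_2$) on $[\kappa\tau,\tau]$ after shifting $\xi_2$ so that $\underline u_2(\kappa\tau,\cdot)\le\underline u_1(\kappa\tau,\cdot)$. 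Because lowering a lower solution at the interface and continuing preserves the lower-solution property, the concatenation is a single lower solution on $[0,\tau]$ whose bump of height $\ge p$ sits near $x-x_1=\bar c\tau$; letting $\kappa$ range over $[0,1]$ sweeps the whole window and yields $\inf_{(c_l^*+\epsilon)t\le x-x_1\le(c_r^*-\epsilon)t}u\ge p$. The two delicate points to watch are the pointwise verification above and the admissibility of the time-concatenated profile across $t=\kappa\tau$.
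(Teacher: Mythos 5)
Your outer super\-/solution estimates and the forward--backward gluing are sound and coincide with the paper's proof (its Step 2 uses the single super-solution $\min\{1,\Gamma_0e^{\lambda_r^*(-x+c_r^*t)},\Gamma_0e^{\lambda_l^*(-x+c_l^*t)}\}$, its Step 1 is exactly your two-stage comparison with varying shifts $\xi_i$). The genuine gap is in the step you yourself flagged as the main obstacle: verifying that $\underline u_i$ is a lower solution. Your plan --- put all three exponents $\rho_i,\rho_i(1\pm\delta_i)$ inside the region where $G_{\eta_i}(c_i,\cdot)>0$, keep only the linear bound $f(u)\geqslant (f'(0)-\eta_i)u$, and check the displayed inequality on $(\mu_i,\nu_i)$ ``using the concavity of $G_{\eta_i}(c_i,\cdot)$'' --- cannot work, because strict concavity (see \eqref{5.79}) runs against that inequality, not for it. Write $G^0=G_{\eta_i}(c_i,\rho_i)$, $G^{\pm}=G_{\eta_i}(c_i,\rho_i(1\pm\delta_i))$, $P=G^0-G^{+}$, $Q=G^0-G^{-}$; since $\rho_i$ is the midpoint of $\rho_i(1-\delta_i)$ and $\rho_i(1+\delta_i)$, strict concavity gives $P+Q>0$. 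At the endpoints $z=\mu_i,\nu_i$ of the support one has $A_iz=B_iz^{1+\delta_i}+D_iz^{1-\delta_i}$, so your inequality there is equivalent to
\[
P\,B_iz^{2\delta_i}+Q\,D_i\leqslant 0,\qquad z\in\{\mu_i,\nu_i\}.
\]
But the endpoints straddle the minimizer $z_*=(D_i/B_i)^{1/(2\delta_i)}$ of $z\mapsto B_iz^{\delta_i}+D_iz^{-\delta_i}$, i.e.\ $B_i\mu_i^{2\delta_i}<D_i<B_i\nu_i^{2\delta_i}$. If $P\geqslant0$, the condition at $\nu_i$ gives $0\geqslant P B_i\nu_i^{2\delta_i}+QD_i\geqslant (P+Q)D_i>0$; if $P<0$, the condition at $\mu_i$ gives $0\geqslant PB_i\mu_i^{2\delta_i}+QD_i>(P+Q)D_i>0$. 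Either way a contradiction, and it is strict, so the sub-solution inequality fails on an open set near an edge of the bump --- precisely where the nonlinearity gives you nothing because $\underline u_i\to0$ there. Note the argument never used where the exponents sit relative to the roots $\alpha^{\pm},\beta^{\pm}$: with only the linear rate $f(u)\geqslant(f'(0)-\eta_i)u$, \emph{no} bump of the form $\max\{0,Ae^{\rho\zeta}-Be^{\rho(1+\delta)\zeta}-De^{\rho(1-\delta)\zeta}\}$ can be a lower solution.

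This shows why the two changes you made to the paper's scheme are not cosmetic. First, the paper anchors $\rho_i(1+\delta_i)$ exactly at the root $\beta^{\pm}(c_i)$ of $G_{\eta_i}(c_i,\cdot)$ (taking $\rho_i=\tfrac12(\beta^{\pm}+\gamma^{\pm})$, $\delta_i=(\beta^{\pm}-\gamma^{\pm})/(\beta^{\pm}+\gamma^{\pm})$), so the $B_i$-term drops out of the linearized computation and $B_i$ remains a free parameter used only, via Lemma \ref{th3.4}, to shrink $H_i^{\rm max}$ and the support. Second, and crucially, the paper does not discard the superlinear gain bought by sacrificing $\eta_i$ in the linear rate: inequality \eqref{5.36}, $f(u)\geqslant(f'(0)-\eta_i)u+M(\delta_i)u^{1+\delta_i}$ on $[0,p_2]$, is exactly what the calibrations $(A_i)^{\delta_i}=G_{\eta_i}(c_i,\rho_i)/M(\delta_i)$ and $D_iG_{\eta_i}(c_i,\gamma^{\pm}(c_i))=A_iG_{\eta_i}(c_i,\rho_i)$ feed on, reducing the verification to the elementary inequality $z\leqslant z^{1+\delta}+z^{1-\delta}$ for $z>0$. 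Dropping that term removes the only mechanism that can absorb the positive contribution $A_iG_{\eta_i}(c_i,\rho_i)z$ of the leading exponential. A secondary point: you create the initial plateau by waiting a time $T_0$, which would yield the middle estimate of \eqref{1.6} only for $t\geqslant T_0$ and with a spatial shift, whereas the theorem asserts it for all $t>0$; the paper avoids this by using continuity of $u_0$ at $x_1$ to get $u_0\geqslant p_1$ on an interval $[-r,r]$ and then shrinking the supports of the bumps to fit inside it.
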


\begin{proof}
\emph{Step 1} (\emph{lower solution and ``forward-backward spreading'' method}).
From $u_0(x_1)>0$,  by translating the $x$-axis, we can find two positive  constants $p_1$ and $r$ such that
\begin{equation}\label{5.89}
u_0(x)\geqslant p_1~~\text{for}~~x\in[-r,r].
\end{equation}
For  small $\epsilon>0$, let $\eta_1\in\mathbb R^+$ and $\eta_2\in\mathbb R^+$ be the constants satisfying \eqref{5.12}.
By $f(u)\in C^1([0,1])$ and $f'(0)>0$,  there is a constant $p_2\in(0, p_1]$ such that
\[
f(u)\geqslant (f'(0)-\frac{\eta}{2})u~~\text{for}~u\in [0,p_2],
\]
where $\eta=\min\{\eta_1,\eta_2\}$. For any $\delta\in(0,1)$, by taking $M(\delta)=\eta p_2^{-\delta}/2$, we can get that
\begin{equation}\label{5.36}
f(u)\geqslant (f'(0)-\eta_i)u+M(\delta)u^{1+\delta}~~\text{for}~u\in [0,p_2].
\end{equation}

Now we  prove that for any  $c_1\in(c_r^*-\epsilon,c_r^*)$ and $c_2\in(c_l^*,c_l^*+\epsilon)$ there is a constant $p\in(0,1)$ such that
\[
u(\tau,X)\geqslant p~~\text{for any given}~\tau> 0,~X\in[c_2\tau,c_1\tau],
\]
Let $\kappa$ be a constant defined by
\[
\kappa=\frac{X-c_2\tau}{c_1\tau-c_2\tau}\in[0,1].
\]
In the following proof, we give the ``forward-backward spreading'' method and  divide the time  period of $[0,\tau]$ into two parts $[0,\kappa \tau]$ and $[\kappa \tau,\tau]$.

In $[0,\kappa \tau]$,  for $c_1\in(c_r^*-\epsilon,c_r^*)$ we choose the same  $\alpha^+(c_1)$, $\beta^+(c_1)$ and $\gamma^+(c_1)$ as those in \eqref{5.13}.
Construct a set of lower solutions  which spread at a speed of $c_1$  as follows
\begin{equation}\label{5.86}
\underline u_1(t,x;\xi_1)=\max\big\{0,~H_1(e^{\rho_1(-x+c_1t+\xi_1)})\big\}~~\text{for}~t\in[0,\kappa \tau],~x\in\mathbb R
\end{equation}
with
\begin{equation}\label{5.85}
\xi_1\in\left[-r+\rho_1^{-1}\ln\nu_1,~r+\rho_1^{-1}\ln\mu_1\right],
\end{equation}
where
\[
\begin{aligned}
&H_1(z)=A_1z-B_1z^{1+\delta_1}-D_1z^{1-\delta_1}~~\text{for}~z>0,\\
&\rho_1=\frac{\beta^+(c_1)+\gamma^+(c_1)}{2},~~\delta_1=\frac{\beta^+(c_1)-\gamma^+(c_1)}{\beta^+(c_1)+\gamma^+(c_1)},
\\
&(A_1)^\delta=\frac{G_{\eta_1}(c_1,\rho_1)}{M(\delta_1)},~~~D_1=\frac{A_1 G_{\eta_1}(c_1,\rho_1)}{G_{\eta_1}(c_1,\gamma^+(c_1))},~B_1\in\left(0,~A_1^2/(4D_1)\right),\\
&(\mu_1,\nu_1)\triangleq\big\{z>0~|~H_1(z)>0\big\}.
\end{aligned}
\]
Here $G_\eta(c,\lambda)$ is defined by \eqref{5.92}.
By Lemma \ref{th3.4}, we can choose $B_1\in\left(0,A_1^2/(4D_1)\right)$ close  to $A_1^2/(4D_1)$ such that
\begin{equation}\label{5.14}
H_1^{\rm{max}}\triangleq\sup\limits_{z>0}\big\{H_1(z)\big\}\leqslant p_2\leqslant p_1,~~\rho_1^{-1}\big(\ln\nu_1-\ln\mu_1\big)\leqslant r/2.
\end{equation}
Let $z_1$ be the constant in $(\mu_1,\nu_1)$ such that $H_1^{\rm{max}}=H_1(z_1)$. A simple calculation  implies that
\[
\underline u_1(0,x;\xi_1)=
\left\{
\begin{aligned}
&0 &\text{for}~x\notin\Omega_1,\\
&H_1(e^{\rho_1(-x+\xi_1)}) &\text{for}~x\in\Omega_1,
\end{aligned}
\right.
\]
where
\[
\Omega_1=(\xi_1-\rho_1^{-1}\ln \nu_1,\xi_1-\rho_1^{-1}\ln \mu_1).
\]
By \eqref{5.85} we have that $\Omega_1\subseteq(-r,r)$. From \eqref{5.89} and $H_1^{\rm{max}}\leqslant p_1$, it follows that
\[
\underline u_1(0,x;\xi_1)\leqslant u_0(x)~~\text{for}~x\in\mathbb R.
\]

Next we verify  that $\underline u_1(t,x;\xi_1)$ is a lower solution of equation \eqref{1.1}.
When $x-c_1t\notin \overline \Omega_1 $,  it is easy to check that $u_1(t,x;\xi_1)=0$ and
\[
\partial_t\underline u_1(t,x;\xi_1)- k*\underline u_1(t,x;\xi_1)+\underline u_1(t,x;\xi_1)- f(\underline u_1(t,x;\xi_1))\leqslant 0.
\]
When $x-c_1t\in  \Omega_1 $, we  have that $u_1(t,x;\xi_1)=H_1(e^{\rho_1(-x+c_1t+\xi_1)})$.
By \eqref{5.36},  some calculations show that
\[
\begin{aligned}
&\partial_t\underline u_1(t,x;\xi_1)- k*\underline u_1(t,x;\xi_1)+\underline u_1(t,x;\xi_1)- f(\underline u_1(t,x;\xi_1))\\
&\quad \quad \leqslant A_1G_{\eta_1}(c_1,\rho_1)e^{\rho_1(-x+c_1t+\xi_1)}-B_1G_{\eta_1}(c_1,\rho_1(1+\delta_1))e^{\rho_1(1+\delta_1)(-x+c_1t+\xi_1)}\\
&\quad\quad\quad -D_1G_{\eta_1}(c_1,\rho_1(1-\delta_1))e^{\rho_1(1-\delta_1)(-x+c_1t-\xi_1)}+M(\delta_1)A_1^{1+\delta}e^{\rho_1(1+\delta_1)(-x+c_1t+\xi_1)}.
\end{aligned}
\]
Recall the definitions of $\rho_1$, $\delta_1$, $A_1$ and $D_1$, then we get from  \eqref{5.92}  that
\[
\begin{aligned}
&G_{\eta_1}(c_1,\rho_1(1+\delta_1))=G_{\eta_1}(c_1,\beta^+(c_1))=0,\\
&D_1G_{\eta_1}(c_1,\rho_1(1-\delta_1))=D_1G_{\eta_1}(c_1,\gamma^+(c_1))=A_1 G_{\eta_1}(c_1,\rho_1),\\
&M(\delta_1)(A_1)^{1+\delta}=A_1G_{\eta_1}(c_1,\rho_1).
\end{aligned}
\]
It follows that
\[
\begin{aligned}
&\partial_t\underline u_1(t,x;\xi_1)- k*\underline u_1(t,x;\xi_1)+\underline u_1(t,x;\xi_1)- f(\underline u_1(t,x;\xi_1))\\
\leqslant~&A_1G_{\eta_1}(c_1,\rho_1)\big[e^{\rho_1(-x+c_1t+\xi_1)}-e^{\rho_1(1+\delta_1)(-x+c_1t+\xi_1)}-e^{\rho_1(1-\delta_1)(-x+c_1t+\xi_1)}\big]\leqslant0.
\end{aligned}
\]
Then $\underline u_1(t,x;\xi_1)$ is a lower solution of equation \eqref{1.1}.

Therefore, Lemma \ref{th3.1} implies
\[
u(t,x)\geqslant \underline u_1(t,x;\xi_1)~~\text{for}~t\in[0,\kappa \tau],~x\in\mathbb R.
\]
Define $x_1(t)=c_1t+\xi_1-\rho_1^{-1}\ln z_1$ with $t\in[0,\kappa \tau]$ and it follows that
\[
u(t,x_1(t))\geqslant \underline u_1(t,x_1(t);\xi_1)=H_1^{\text{max}} ~\text{for}~t\in[0,\kappa \tau],
\]
The  arbitrariness of the parameter $\xi_1$ in \eqref{5.85} shows that
\[
u(t,x)\geqslant H_1^{\text{max}} ~\text{for}~t\in[0,\kappa \tau],~x\in[c_1t-r/2,~c_1t+r/2].
\]
Then we have
\begin{equation}\label{5.84}
u(\kappa \tau,x)\geqslant H_1^{\text{max}} ~\text{for}~x\in[c_1\kappa \tau-r/2,~c_1\kappa \tau+r/2].
\end{equation}

In the second time period  $[\kappa \tau,\tau]$, for $c_2\in(c_l^*,c_l^*+\epsilon)$ we choose  the same $\alpha^-(c_2)$, $\beta^-(c_2)$ and $\gamma^-(c_2)$ as those in \eqref{5.91}.
Construct  another set of lower solutions  which spread at a speed of $c_2$  as follows
\begin{equation}\label{5.81}
\begin{aligned}
\underline u_2(t,x;\xi_2)&=\max\big\{0,~H_2(e^{\rho_2(-x+c_2t+\xi_2)})\big\}~\text{for}~t\in[\kappa \tau,\tau],~x\in\mathbb R\\
&=\left\{
\begin{aligned}
&0~&\text{for}~x-c_2t\notin\Omega_2,\\
&H_2(e^{\rho_2(-x+c_2t+\xi_2)})~&\text{for}~x-c_2t\in\Omega_2
\end{aligned}
\right.
\end{aligned}
\end{equation}
with
\begin{equation}\label{5.83}
\xi_2\in\left[(c_1-c_2)\kappa\tau+\rho_2^{-1}\ln\nu_2-r/2,~(c_1-c_2)\kappa\tau+\rho_2^{-1}\ln\mu_2+r/2\right],
\end{equation}
where
\[
\begin{aligned}
&\Omega_2=(~\xi_2-\rho_2^{-1}\ln \nu_2,~\xi_2-\rho_2^{-1}\ln \mu_2),\\
&H_2(z)=A_2z-B_2z^{1+\delta_2}-D_2z^{1-\delta_2}~~\text{for}~z>0,\\
&\rho_2=\frac{\beta^-(c_2)+\gamma^-(c_2)}{2},~~\delta_2=\frac{\beta^-(c_2)-\gamma^-(c_2)}{\beta^-(c_2)+\gamma^-(c_2)},
\\
&(A_2)^\delta=\frac{G_{\eta_2}(c_2,\rho_2)}{M(\delta_2)},~~~D_2=\frac{A_2 G_{\eta_2}(c_2,\rho_2)}{G_{\eta_2}(c_2,\gamma^-(c_2))},~B_2\in\left(0, A_2^2/(4D_2)\right),\\
&(\mu_2,\nu_2)\triangleq\left\{z>0~|~H_2(z)>0\right\}.
\end{aligned}
\]
Here $G_\eta(c,\lambda)$ is defined by \eqref{5.92}. By Lemma \ref{th3.4}, we can choose $B_2\in\left(0, A_2^2/(4D_2)\right)$  close  to $A_2^2/(4D_2)$ such that
\begin{equation}\label{5.15}
H_2^{\rm{max}}\triangleq\sup\limits_{z>0}\big\{H_2(z)\big\}\leqslant H_1^{\rm{max}}\leqslant p_2\leqslant p_1,~~\rho_2^{-1}\big(\ln\nu_2-\ln\mu_2\big)\leqslant r/2,
\end{equation}
Let $z_2$ be the constant in $(\mu_2,\nu_2)$ such that $H_2^{\rm{max}}=H_2(z_2)$. At time $t=\kappa\tau$, we have
\[
\underline u_2(\kappa \tau,x;\xi_2)
=\left\{
\begin{aligned}
&0~&\text{for}~x\notin\Omega_2+c_2\kappa \tau,\\
&H_2(e^{\rho_2(-x+c_2\kappa \tau+\xi_2)})~&\text{for}~x\in\Omega_2+c_2\kappa \tau,
\end{aligned}
\right.
\]
where
\[
\Omega_2+c_2\kappa \tau\triangleq (~\xi_2-\rho_2^{-1}\ln \nu_2+c_2\kappa \tau,~\xi_2-\rho_2^{-1}\ln \mu_2+c_2\kappa \tau)\subseteq(c_1\kappa\tau-r/2,c_1\kappa\tau+r/2).
\]
Then it follows from \eqref{5.84} that $u(\kappa \tau,x)\geqslant\underline u_2(\kappa \tau,x;\xi_2)$ for $x\in\mathbb R$ and any $\xi_2$ satisfying \eqref{5.83}.
Similarly to the case of $\underline u_1(t,x;\xi_1)$, it can be verified  that $\underline u_2(t,x;\xi_2)$ is a lower solution of equation \eqref{1.1} in time $[\kappa \tau,\tau]$. Then  for any $\xi_2$ satisfying \eqref{5.83}, by Lemma \ref{th3.1} we have that
\[
u(t,x)\geqslant\underline u_2(t,x;\xi_2)~\text{for}~t\in[\kappa \tau,\tau],~x\in\mathbb R.
\]

For $t\in[\kappa \tau,\tau]$, we define $x_2(t)=c_2t+\xi_2-\rho_2^{-1}\ln z_2$  and  it follows that
\[
u(t,x_2(t))\geqslant\underline u_2(t,x_2(t);\xi_2)=H(z_2)=H_2^{\text{max}}.
\]
Since $\rho_2^{-1}\ln \nu_2-r/2\leqslant\rho_2^{-1}\ln z_2\leqslant\rho_2^{-1}\ln\mu_2+r/2$, we can choose $\xi_2$ satisfying \eqref{5.83} and
\[
\xi_2=(c_1-c_2)\kappa \tau+\rho_2^{-1}\ln z_2,
\]
It follows that $x_2(\tau)=c_1\kappa\tau+c_2(1-\kappa)\tau=X$.  By taking $p=H_2^{\text{max}}$, we have that
\[
u(\tau,X)\geqslant p~~\text{for any}~\tau>0,~X\in[c_2\tau,c_1\tau].
\]
Therefore,   for any   small $\epsilon>0$ there is a constant $p\in(0,1)$ such that
\[
\inf\limits_{(c_l^*+\epsilon)t \leqslant x\leqslant(c_r^*-\epsilon)t} u(t, x) \geqslant p~~~\text{for any}~~t>0.
\]

\emph{Step 2} (\emph{upper solution}). Now we begin to prove that
\begin{equation}\label{5.98}
\lim\limits_{t\rightarrow+\infty}~\sup\limits_{x\leqslant (c_l^*-\epsilon)t}u(t, x)=0~~\text{and}~~
\lim\limits_{t\rightarrow+\infty}\sup\limits_{x\geqslant (c_r^*+\epsilon)t}u(t, x)=0.
\end{equation}
Construct an upper solution as follows
\[
\bar u(t,x)=\min\left\{1,~\Gamma_0e^{\lambda_r^*(-x+c_r^*t)},~\Gamma_0 e^{\lambda_l^*(-x+c_l^*t)}\right\},
\]
where the constant $\Gamma_0\geqslant \max\{1,\Gamma\}$  is large enough   such that $\bar u(0,x)\geqslant u_0(x)$.

Next we verify  that $\bar u(t,x)$ is an upper solution of equation \eqref{1.1}.
Define
\[
G(c,\lambda)=c\lambda-\int_{\mathbb R}k(x)e^{\lambda x}dx+1-f'(0)~~\text{for}~c\in\mathbb R ,~\lambda\in (\lambda^-,\lambda^+).
\]
Then it follows from \eqref{2.2} and \eqref{2.3}  that $G(c_r^*,\lambda_r^*)=G(c_l^*,\lambda_l^*)=0$. By a simple calculation, if $x\leqslant c^*_lt+(\lambda_l^*)^{-1}\ln \Gamma_0$, then $\bar u(t,x)=\Gamma_0e^{\lambda_l^*(-x+c_l^*t)}$ and it follows from (H) that
\[
\bar u_t(t,x)- k* \bar u(t,x)+\bar u(t,x) - f(\bar u(t,x))\geqslant G(c_l^*,\lambda_l^*)\Gamma_0e^{\lambda_l^*(-x+c_l^*t)}=0.
\]
If $x\geqslant c^*_rt+(\lambda_r^*)^{-1}\ln \Gamma_0$, then $\bar u(t,x)=\Gamma_0 e^{\lambda_r^*(-x+c_r^*t)}$ and we get from (H) that
\[
\bar u_t(t,x)- k* \bar u(t,x)+\bar u(t,x) - f(\bar u(t,x))\geqslant G(c_r^*,\lambda_r^*)\Gamma_0 e^{\lambda_r^*(-x+c_r^*t)}=0.
\]
If  $c^*_lt+(\lambda_l^*)^{-1}\ln \Gamma_0\leqslant x\leqslant c^*_rt+(\lambda_r^*)^{-1}\ln \Gamma_0$, then $\bar u(t,x)=1$ and
\[
\bar u_t(t,x)- k* \bar u(t,x)+\bar u(t,x) - f(\bar u(t,x))\geqslant0.
\]
Therefore,  we get that $\bar u(t,x)$ is an upper solution of equation \eqref{1.1}. Lemma \ref{th3.1} implies that $u(t,x)\leqslant \bar u(t,x)$ for $t\geqslant0$, $x\in\mathbb R$. It follows that
\[
\begin{aligned}
&\sup\limits_{x\leqslant(c_l^*-\epsilon)t}u(t,x)\leqslant\sup\limits_{x\leqslant(c_l^*-\epsilon)t}\bar u(t,x)\leqslant \Gamma_0e^{\lambda_l^*\epsilon t},\\
&\sup\limits_{x\geqslant (c_r^*+\epsilon)t}u(t, x)\leqslant \sup\limits_{x\geqslant (c_r^*+\epsilon)t}\bar u(t, x)\leqslant  \Gamma_0e^{-\lambda_r^*\epsilon t},
\end{aligned}
\]
which means that \eqref{5.98} holds.
\end{proof}

By Theorem 6.2 in the  classic spreading speed
theory \cite{Wei1982}, we can get from the second inequality of \eqref{1.6} that for any small $\epsilon>0$,
\[
\inf\limits_{(c_l^*+\epsilon)t \leqslant x\leqslant (c_r^*-\epsilon)t}u(t, x)= 1~~\text{as}~t\rightarrow+\infty.\\
\]
Then combining with the other two inequalities of \eqref{1.6}, we see that   $u(t,x)$ satisfies the propagation property \eqref{1.2}.

By  the new lower solutions and the ``forward-backward spreading'' method  above, we  give a corollary which shows that if $u_0(x_1)>0$  for some $x_1\in\mathbb R$, then the property $u>0$ will spread over an expanding spatial  region.

\begin{corollary}\label{th2.5}
Suppose that {\rm(H)}, {\rm(K1)} and {\rm(K2)} hold. For any small $\epsilon>0$ and small $p>0$, there is a constant  $r_\epsilon(p)>0$ such that if
\[
u_0(x)\geqslant p,~x\in[x_1-r_\epsilon(p),x_1+r_\epsilon(p)]~~\text{for some}~x_1\in\mathbb R,
\]
then  the solution $u(t,x)$ of equation \eqref{1.1} satisfies that
\[
u(t,x)\geqslant p~~\text{for}~t>0,~x\in[x_1+(c_l^*+\epsilon)t,~x_1+(c_r^*-\epsilon)t].
\]
Moreover, for any small $\epsilon>0$, we have that $r_\epsilon(p)\rightarrow0$ as $p\rightarrow0$.
\end{corollary}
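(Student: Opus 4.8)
The plan is to re-run Step 1 of the proof of Theorem \ref{th2.1}, but to fix the target height at the prescribed value $p$ from the outset and then to read the radius $r_\epsilon(p)$ directly off the width of the support of the resulting lower solutions. First I would fix $\epsilon>0$ and choose $c_1\in(c_r^*-\epsilon,c_r^*)$ and $c_2\in(c_l^*,c_l^*+\epsilon)$; this determines the exponents $\rho_1,\delta_1,\rho_2,\delta_2$ and the coefficients $A_1,D_1,A_2,D_2$ through \eqref{5.13} and \eqref{5.91}, none of which depend on $p$. For $p$ small enough that $p\leqslant p_2$ (with $p_2$ the threshold from \eqref{5.36}, which depends only on $\epsilon$ through $\eta=\min\{\eta_1,\eta_2\}$), Lemma \ref{th3.4} lets me pick $B_1(p),B_2(p)\in(0,A_i^2/(4D_i))$ so that $H_1^{\mathrm{max}}=H_2^{\mathrm{max}}=p$, and I define $\underline u_1,\underline u_2$ exactly as in \eqref{5.86} and \eqref{5.81}.

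Next I would set
\[
r_\epsilon(p):=2\max\big\{\rho_1^{-1}(\ln\nu_1-\ln\mu_1),\ \rho_2^{-1}(\ln\nu_2-\ln\mu_2)\big\},
\]
which is precisely the quantity constrained in \eqref{5.14} and \eqref{5.15}, so with this radius the hypotheses of the forward-backward construction are met by construction. Since $u_0\geqslant p=H_1^{\mathrm{max}}$ on $[x_1-r_\epsilon(p),x_1+r_\epsilon(p)]$ and, after translating $x_1$ to the origin, the support of $\underline u_1(0,\cdot;\xi_1)$ lies inside this interval, the comparison principle Lemma \ref{th3.1} gives $u\geqslant\underline u_1$ on $[0,\kappa\tau]$; the matching $H_2^{\mathrm{max}}\leqslant H_1^{\mathrm{max}}$ then lets $\underline u_2$ take over on $[\kappa\tau,\tau]$, and the sliding-$\xi$ argument yields $u(\tau,X)\geqslant p$ for every $X\in[x_1+c_2\tau,x_1+c_1\tau]$. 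Because $c_2<c_l^*+\epsilon$ and $c_1>c_r^*-\epsilon$, this interval contains $[x_1+(c_l^*+\epsilon)\tau,\,x_1+(c_r^*-\epsilon)\tau]$, which is exactly the claimed cone.

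It remains to verify the limit $r_\epsilon(p)\to0$ as $p\to0$, and this is really the only new point beyond Theorem \ref{th2.1}. By the last assertion of Lemma \ref{th3.4}, forcing $H_i^{\mathrm{max}}=p\to0^+$ forces $B_i(p)\to A_i^2/(4D_i)$, and the explicit formulas for $\mu_i,\nu_i$ in that lemma show that both endpoints converge to the common limit $(2D_i/A_i)^{1/\delta_i}$; hence $\ln\nu_i-\ln\mu_i\to0$ and therefore $r_\epsilon(p)\to0$. I do not expect a genuine obstacle here, since the whole argument is a quantitative bookkeeping of the construction already carried out in Theorem \ref{th2.1}, and the one substantive input -- that the support width of the lower solution shrinks to zero as its height does -- is exactly what Lemma \ref{th3.4} was designed to supply. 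The only mild point to watch is that the admissible range of $p$ depends on $\epsilon$ through $p_2$, which is harmless because the statement concerns small $p$ for each fixed $\epsilon$.
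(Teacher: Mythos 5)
Your proposal is correct and follows essentially the same route as the paper's proof: both re-run the forward--backward construction of Theorem \ref{th2.1} with the heights pinned at $H_1^{\rm max}=H_2^{\rm max}=p$ via Lemma \ref{th3.4}, define $r_\epsilon(p)$ from the support widths $\rho_i^{-1}(\ln\nu_i-\ln\mu_i)$ (the paper takes twice their sum, you take twice their maximum --- an immaterial difference, as either choice guarantees $r_i(p)\leqslant r_\epsilon(p)/2$), and deduce $r_\epsilon(p)\rightarrow0$ from $B_i(p)\rightarrow A_i^2/(4D_i)$, your verification via the common limit $(2D_i/A_i)^{1/\delta_i}$ of $\mu_i,\nu_i$ being a slightly more explicit rendering of the paper's appeal to Lemma \ref{th3.4}.
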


\begin{proof}
We use the same notations as those in the proof of Theorem \ref{th2.1}. By translating the $x$-axis, we suppose that $x_1=0$.
From Lemma \ref{th3.4}, for any  $p\in(0,p_2]$, there are $B_1(p)\in\left(0,~A_1^2/(4D_1)\right)$ and $B_2(p)\in\left(0, A_2^2/(4D_2)\right)$ satisfying  that $H_1^{\text{max}}=H_2^{\text{max}}=p$.
We define  $r(p)=2(r_1(p)+r_2(p))$, where $r_i(p)$ is the length of the set $\{x\in\mathbb R~|~H_i(e^{-\rho_ix})>0\}$.
We suppose that
\[
u_0(x)\geqslant p~~\text{for}~x\in[-r(p),r(p)].
\]

Define the lower solutions $\underline u_1(t,x;\xi_1)$ and $\underline u_2(t,x;\xi_2)$ by \eqref{5.86} and \eqref{5.81}, respectively, where
\[
\begin{aligned}
&\xi_1\in\left[-r(p)+\rho_1^{-1}\ln\nu_1,~r(p)+\rho_1^{-1}\ln\mu_1\right],\\
&\xi_2\in\left[(c_1-c_2)\kappa\tau+\rho_2^{-1}\ln\nu_2-r_2(p),~(c_1-c_2)\kappa\tau+\rho_2^{-1}\ln\mu_2+r_2(p)\right].
\end{aligned}
\]
It follows that
\[
\Omega_1\subseteq (-r(p),r(p)),~\Omega_2+c_2\kappa \tau\subseteq(c_1\kappa\tau-r_2(p),c_1\kappa\tau+r_2(p)).
\]
Then by the same method as the proof of Theorem \ref{th2.1}, we can prove Corollary \ref{th2.5}. Moreover, as $p\rightarrow0^+$, it follows from Lemma \ref{th3.4} that $B_i(p)\rightarrow A_i^2/(4D_i)$, which implies that $r_i(p)\rightarrow0$.
We can see that $r_i(p)$ is dependent on $c_i$, since $H_i$ and $\rho_i$ are   dependent on $c_i$. Therefore, $r_i(p)$ is also dependent on $\epsilon$.
\end{proof}

\begin{remark}\rm\label{re3.5}
When  considering a reaction-diffusion equation or when the kernel in equation \eqref{1.1} is symmetric,  we point out that the new lower solutions \eqref{5.86} and \eqref{5.81} remain  available.
However, it is not necessary to apply the ``forward-backward spreading'' method, since we can use Theorem  \ref{th2.4} instead (more details are found in proof of Theorem \ref{th2.3}).  Then the conclusion  in Corollary \ref{th2.5}  also holds in the reaction-diffusion equation \eqref{1.4}.
\end{remark}

\section{Spreading speeds for exponentially decaying initial data}

In this section we establish the relationship between spreading speed and exponentially decaying initial data. First we state the  weak ``hair-trigger'' effect in nonlocal dispersal equations (see e.g. \cite{AW1978,FT2018,Alfaro2017}).

\begin{lemma}[Weak ``hair-trigger'' effect]\label{th3.3}
Suppose that (H) holds and $k(\cdot)$ is a symmetric kernel satisfying (K1).
Let $u(t,x)$ be the solution of equation \eqref{1.1} with initial data $u_0(x)$.
If there are two constants $x_0\in \mathbb R$ and  $\omega_0\in (0,1)$ such that
\[
u_0(x)\geqslant\omega_0~~\text{for}~~x\in B_1(x_0),
\]
then for any $\omega\in (0,1)$, there exists $T_{\omega_0}^\omega\geqslant0$ (independent of $x_0$) such that
\[
u(t,x)\geqslant\omega~~\text{for}~~x\in B_1(x_0),~t\geqslant T_{\omega_0}^\omega,
\]
where $B_1(x_0)\triangleq\big\{x\in\mathbb R~\big|~|x-x_0|\leqslant1\big\}$.
\end{lemma}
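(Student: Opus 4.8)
The goal is to show that an initial bump of height $\omega_0$ on a unit interval forces the solution up to any level $\omega<1$ on that interval after a uniform time. The natural strategy is to build an explicit subsolution that does not move spatially but only grows in amplitude, driving the solution toward the stable state $u\equiv1$ on a bounded region, and then invoke the comparison principle (Lemma \ref{th3.1}). The plan is to reduce everything to an autonomous ODE-type lower solution by discarding the favorable part of the nonlocal term.

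First I would observe that, because $k$ is symmetric and $u(t,\cdot)$ stays symmetric and monotone under the hypotheses of Theorem \ref{th2.4}, it suffices to control the solution of the spatially homogeneous problem $v'(t)=f(v(t))$, $v(0)=\omega_0$, which increases monotonically to $1$ under (H); by continuity there is a first time $T_{\omega_0}^\omega$ at which $v(T_{\omega_0}^\omega)\ge\omega$, and this time depends only on $\omega_0$ and $\omega$, not on $x_0$. The content of the lemma is that the genuine PDE dynamics dominates this ODE lower bound on the interval $B_1(x_0)$. To make this rigorous I would construct a lower solution of the form
\[
\underline u(t,x)=\phi(t)\,\psi(x-x_0),
\]
where $\psi$ is a fixed smooth bump supported in $B_1(x_0)$ with $\psi\le 1$, and $\phi(t)$ solves a suitable scalar ODE. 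The key inequality to verify is
\[
\underline u_t - k*\underline u + \underline u - f(\underline u)\le 0,
\]
and here the symmetry of $k$ is exactly what lets me discard the nonlocal diffusion in the favorable direction: on the region where $\psi$ attains its maximum the convolution $k*\underline u$ only helps, while on the tails one absorbs the loss into the reaction term using $f'(0)>0$ and the linear lower bound available near $u=0$.

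The main obstacle will be handling the nonlocal term $k*\underline u - \underline u$, which — unlike the Laplacian — does not vanish at the peak of a smooth bump and can be strongly negative there because mass leaks outside $B_1(x_0)$. I would control this by choosing $\psi$ with a sufficiently flat interior and small enough amplitude that the leakage $-\,[\underline u - k*\underline u]$ is dominated by the reaction surplus $f(\underline u)-f'(0)\underline u$-type terms; concretely, since $\int_{\mathbb R}k=1$ one has $k*\underline u - \underline u \ge -\underline u$ pointwise, so it is enough that $\phi$ satisfy $\phi'\le f(\phi\psi)/\psi - \phi$ on the support, and one then picks the initial amplitude and the time horizon so that $\phi(t)\psi$ stays below the level where $f$ loses its favorable sign. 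Because all these choices depend only on $f$, $k$, $\omega_0$ and $\omega$ and are invariant under the translation $x\mapsto x-x_0$, the resulting time $T_{\omega_0}^\omega$ is independent of $x_0$, which is precisely the uniformity asserted in the statement.
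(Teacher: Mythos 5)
Your plan has a fatal flaw at its central step, and it is worth noting first that the paper itself does not prove this lemma: it is quoted from the literature (Aronson--Weinberger, Alfaro, Finkelshtein--Tkachov), where the proofs are built on large-domain eigenvalue arguments, not on a subsolution confined to the unit ball. The problem with your construction is that discarding the convolution via $k*\underline u-\underline u\geqslant -\underline u$ is far too lossy. Under (H) one only has $f(u)\leqslant f'(0)u$ with $f'(0)>0$ possibly arbitrarily small, so your sufficient condition $\phi'\leqslant f(\phi\psi)/\psi-\phi$ forces $\phi'\leqslant (f'(0)-1)\phi$; whenever $f'(0)<1$ this makes $\phi$ decay exponentially, and it can never climb from $\omega_0$ to a level $\omega$ close to $1$. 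More fundamentally, \emph{no} subsolution supported in the fixed ball $B_1(x_0)$ can do the job: if $\bar x$ is a point where $\underline u(t,\cdot)$ attains its maximum, then since the support has diameter at most $2$,
\[
k*\underline u(t,\bar x)\leqslant \underline u(t,\bar x)\int_{|z|\leqslant 2}k(z)\,dz,
\]
so the subsolution inequality forces
\[
\partial_t\underline u(t,\bar x)\leqslant \Big(f'(0)-\int_{|z|>2}k(z)\,dz\Big)\,\underline u(t,\bar x),
\]
which is strictly negative whenever $f'(0)<\int_{|z|>2}k(z)\,dz$ --- a situation fully compatible with (H), (K1) and symmetry of $k$. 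Your proposed remedy (a flat interior for $\psi$) helps for the Laplacian, where the diffusion term at a flattened maximum is negligible, but it does nothing here: the deficit $k*\underline u-\underline u$ at the peak is controlled by the mass of $k$ outside the support's diameter, not by derivatives of $\psi$. This is precisely why the hair-trigger effect for nonlocal dispersal is a genuinely nonlocal statement: low-density growth is governed by the operator $u\mapsto k*u-u+f'(0)u$, whose (generalized) principal eigenvalue on an interval $(-R,R)$ is negative for small $R$ and only becomes positive for $R$ large, since it tends to $f'(0)>0$ as $R\to+\infty$. The correct argument therefore builds a growing subsolution on a large ball $B_R(x_0)$ (after first pushing the solution above a small positive level on that larger ball, e.g.\ by a spreading estimate of the type in Theorem \ref{th2.1} or Corollary \ref{th2.5}), and then reads off the conclusion on the subset $B_1(x_0)$; independence of $x_0$ comes from translation invariance, exactly as you say.

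Two further, smaller gaps: the opening reduction is unjustified, since Theorem \ref{th2.4} requires $u_0$ to be symmetric and decreasing on $\mathbb R^+$ and $k$ to be decreasing on $\mathbb R^+$, none of which is assumed here (the lemma's $u_0$ is only bounded below on $B_1(x_0)$, and $k$ is merely symmetric); and the spatially homogeneous ODE solution $v'=f(v)$, $v(0)=\omega_0$, is not a valid comparison function, because $v(0)\leqslant u_0$ fails outside $B_1(x_0)$, where $u_0$ may vanish. Both of these are symptoms of the same underlying issue: the lemma cannot be localized to the unit ball, and any proof must let the dynamics use a large spatial region before returning to $B_1(x_0)$.
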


The following theorem is the main result of this section.

\begin{theorem}\label{th2.3}
Suppose {\rm(H)} holds and $k(\cdot)$  is a symmetric kernel which is decreasing on $\mathbb R^+$ and  satisfies {\rm(K1)}. Denote $c^*\triangleq c_r^*=-c_l^*$ and $\lambda^*\triangleq \lambda_r^*=-\lambda_l^*$. If $f\in C^{1+\delta_0}\big([0,p_0]\big)$ for some  $p_0, \delta_0\in(0,1)$  and $u_0(\cdot)$ satisfies  that
\[
0< u_0(x)\leqslant 1~\text{for}~x\in\mathbb R,~u_0(x)\sim O(e^{-\lambda |x|})~~\text{as}~|x|\rightarrow +\infty~\text{with}~ \lambda\in(0,\lambda^*),
\]
then for any $\epsilon\in(0,c(\lambda))$, the solution $u(t,x)$ of equation \eqref{1.1} has the following properties
\[
\left\{\begin{aligned}
&\inf\limits_{|x|\leqslant (c(\lambda)-\epsilon)t}u(t, x)\rightarrow1,\\
&\sup\limits_{|x|\geqslant (c(\lambda)+\epsilon)t}u(t, x)\rightarrow0
\end{aligned}
\right.~\text{as}~t\rightarrow+\infty,
\]
where $c(\lambda)$ is defined by \eqref{2.1}. Moreover, we have that $c'(\lambda)<0$ for $\lambda\in(0,\lambda^*)$.
\end{theorem}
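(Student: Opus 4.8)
The final claim $c'(\lambda)<0$ on $(0,\lambda^*)$ is immediate, since the proof of Lemma~\ref{th2.99} already shows $c'(\lambda)<0$ on $(0,\lambda_r^*)$ and here $\lambda^*=\lambda_r^*$. For the spreading dichotomy I would prove matching upper and lower bounds. Writing $G(c,\lambda)=c\lambda-\int_{\mathbb R}k(x)e^{\lambda x}dx+1-f'(0)$ as in Step~2 of the proof of Theorem~\ref{th2.1}, the definition of $c(\lambda)$ gives $G(c(\lambda),\lambda)=0$. The upper bound follows from the supersolution
\[
\bar u(t,x)=\min\big\{1,\ \Gamma_0 e^{-\lambda(x-c(\lambda)t)},\ \Gamma_0 e^{\lambda(x+c(\lambda)t)}\big\},
\]
with $\Gamma_0\geqslant\max\{1,\Gamma\}$ chosen so that $\bar u(0,\cdot)\geqslant u_0$: the right factor solves the linearized equation exactly because $G(c(\lambda),\lambda)=0$, the left factor does too since symmetry of $k$ gives $\int_{\mathbb R}k(x)e^{-\lambda x}dx=\int_{\mathbb R}k(x)e^{\lambda x}dx$, the bound $f(u)\leqslant f'(0)u$ upgrades both to supersolutions of \eqref{1.1}, and the minimum of supersolutions is again a supersolution because $k*(\cdot)$ is order preserving. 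Lemma~\ref{th3.1} then gives $u\leqslant\bar u$, whence $\sup_{|x|\geqslant(c(\lambda)+\epsilon)t}u(t,x)\leqslant\Gamma_0 e^{-\lambda\epsilon t}\to0$.

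The lower bound is the substantial part, and here I would first reduce to symmetric data. Using $u_0>0$ and the lower part of the hypothesis $u_0(x)\sim O(e^{-\lambda|x|})$, choose a symmetric $v_0\leqslant u_0$ that is decreasing on $\mathbb R^+$ with $v_0(x)\geqslant c_1 e^{-\lambda|x|}$ for large $|x|$. If $v$ solves \eqref{1.1} from $v_0$, then $u\geqslant v$ by comparison, and Theorem~\ref{th2.4} guarantees that $v(t,\cdot)$ stays symmetric and decreasing on $\mathbb R^+$ for every $t>0$. This monotonicity is precisely the device that converts a single moving bump into a fully filled interval.

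Next, fix $\epsilon'\in(0,\epsilon)$, put $c=c(\lambda)-\epsilon'$, and pick $\mu\in(\lambda,\lambda^*)$. I would use the leading-edge subsolution
\[
\underline u(t,x)=\max\big\{0,\ \phi(x-ct)\big\},\qquad \phi(\xi)=A e^{-\lambda\xi}-q\,e^{-\mu\xi},
\]
a wedge supported on $\{\xi=x-ct>\xi_*\}$. This is where the extra regularity enters: $f\in C^{1+\delta_0}$ yields $f(u)\geqslant f'(0)u-Cu^{1+\delta_0}$, a control the bare $C^1$ hypothesis~(H) cannot supply, and with it a direct computation (replacing $k*\underline u$ by $k*\phi$, permissible since $\underline u\geqslant\phi$) bounds the defect on the support by
\[
A e^{-\lambda\xi}G(c,\lambda)-q\,e^{-\mu\xi}G(c,\mu)+CA^{1+\delta_0}e^{-\lambda(1+\delta_0)\xi}.
\]
Here $G(c,\lambda)=-\lambda\epsilon'<0$, while $G(c,\mu)=\mu\big(c-c(\mu)\big)>0$ once $\epsilon'$ is small, since $c(\cdot)$ decreases on $(0,\lambda^*)$. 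Taking $q$ large then forces the defect $\leqslant0$: if $\lambda(1+\delta_0)<\lambda^*$ one sets $\mu=\lambda(1+\delta_0)$ and absorbs the last term into the middle one, while if $\lambda(1+\delta_0)\geqslant\lambda^*$ one keeps $\mu\in(\lambda,\lambda^*)$ and uses $e^{(\mu-\lambda)\xi}\geqslant q/A$ on the support to see that the middle term still dominates. Choosing $A\leqslant c_1$ and $q$ large gives $\underline u(0,\cdot)\leqslant v_0$, hence $v\geqslant\underline u$; evaluating at the crest produces a constant $h_0\in(0,p_0)$ with $v(t,ct+\xi_0)\geqslant h_0$, and the symmetry and monotonicity of $v(t,\cdot)$ promote this to $v(t,x)\geqslant h_0$ for all $|x|\leqslant ct+\xi_0$.

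Finally I would invoke the weak hair-trigger effect. Fix $\omega\in(0,1)$; Lemma~\ref{th3.3} (which uses the symmetry of $k$) supplies a uniform time $T_{h_0}^{\omega}$ after which any unit ball carrying $v\geqslant h_0$ carries $v\geqslant\omega$. For $|x|\leqslant(c(\lambda)-\epsilon)t$ and $t$ large, the ball $B_1(x)$ lies, at the earlier time $t-T_{h_0}^{\omega}$, inside the region $\{|y|\leqslant c(t-T_{h_0}^{\omega})+\xi_0\}$ on which $v\geqslant h_0$, the containment holding because $\epsilon>\epsilon'$ makes the slack grow linearly in $t$. Thus $v(t,x)\geqslant\omega$ there, and since $\omega<1$ is arbitrary and $u\geqslant v$, we obtain $\inf_{|x|\leqslant(c(\lambda)-\epsilon)t}u(t,x)\to1$. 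I expect the main obstacle to be the subsolution verification in the regime $\lambda(1+\delta_0)\geqslant\lambda^*$, where the nonlinear correction decays faster than the linear one and one must exploit the geometry of the support rather than a termwise comparison of exponents; a secondary delicate point is the clean handoff from the monotonicity-filled level $h_0$ to the hair-trigger upgrade.
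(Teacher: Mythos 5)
Your proof is correct, and its skeleton is the same as the paper's: a symmetric minorant $v_0\leqslant u_0$, decreasing on $\mathbb R^+$, whose solution $v$ stays symmetric and decreasing by Theorem \ref{th2.4}; a traveling exponential-wedge subsolution whose verification rests on the bound $f(u)\geqslant f'(0)u-Mu^{1+\delta}$ supplied by $f\in C^{1+\delta_0}$; the hair-trigger Lemma \ref{th3.3} to lift the crest level to any $\omega<1$; and the same supersolution $\min\{1,\Gamma e^{\lambda(-|x|+c(\lambda)t)}\}$ for the upper bound. The genuine divergence is in the wedge. You run it at the deficient speed $c(\lambda)-\epsilon'$ and restrict the correction exponent to $\mu\in(\lambda,\lambda^*)$, which is exactly what manufactures your Case B ($\lambda(1+\delta_0)\geqslant\lambda^*$) and the support-geometry fix $e^{(\mu-\lambda)\xi}\geqslant q/A$ (which does close: on the support the nonlinear term is dominated by the $q$-term up to a factor of order $q^{-\beta}$ with $\beta=\lambda\delta_0/(\mu-\lambda)>1$, so $q$ large suffices). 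The paper sidesteps both the speed deficit and the case split: since $c(\cdot)$ turns around at $\lambda^*$ and tends to $+\infty$ as $\lambda\rightarrow\lambda^+$, the value $c(\lambda)$ is attained again at a conjugate root $\lambda(1+\delta_\lambda)>\lambda^*$, and $G(c(\lambda),s)>0$ on the entire window $(\lambda,\lambda(1+\delta_\lambda))$, not merely on $(\lambda,\lambda^*)$. Choosing the correction exponent $\lambda(1+\delta)$ with $\delta=\min\{\delta_0,\delta_\lambda/2\}$ (the constraint $\delta\leqslant\delta_0$ keeps $u^{1+\delta_0}\leqslant u^{1+\delta}$, so the $C^{1+\delta_0}$ bound still applies) lets the subsolution $\max\{0,\,z-Lz^{1+\delta}\}$ with $z=\gamma e^{\lambda(-x+c(\lambda)t)}$ travel at exactly speed $c(\lambda)$, with the single termwise absorption $L\geqslant M/G(c(\lambda),\lambda(1+\delta))$. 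What each buys: your route uses no information about $c(\cdot)$ beyond $\lambda^*$ but pays with a two-case analysis and an extra parameter $\epsilon'$ (harmless, since the slack $\epsilon>\epsilon'$ swallows the constant hair-trigger delay); the paper's route is a one-line verification at the exact speed, at the price of first establishing the conjugate root $\delta_\lambda$.
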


\begin{proof}
From the proof of Lemma \ref{th2.99}, we have that
\[
c'(\lambda)<0~\text{for}~\lambda\in(0,\lambda^*)~~\text{and}~c'(\lambda)>0~\text{for}~\lambda\in(\lambda^*,\lambda^+).
\]
Since $c(\lambda)\rightarrow+\infty$ as  $\lambda\rightarrow\lambda^+$,
for any $\lambda\in(0,\lambda^*)$ there is a unique constant $\delta_\lambda>0$ such that
\[
c(\lambda)=c(\lambda(1+\delta_\lambda))~\text{and}~c(s)<c(\lambda)~\text{for}~s\in(\lambda,\lambda(1+\delta_\lambda)).
\]
Define
\[
G(c,\lambda)=c\lambda-\int_{\mathbb R}k(x)e^{\lambda x}dx+1-f'(0)~~\text{for}~c\geqslant c^*,~\lambda\in (\lambda^-,\lambda^+).
\]
For any $\lambda\in(0,\lambda^*)$, it follows from \eqref{2.1} that
\[
G(c(\lambda),\lambda)=G(c(\lambda),\lambda(1+\delta_\lambda))=0
\]
and
\[
G(c(\lambda),s)>G(c(s),s)=0~\text{for}~s\in(\lambda,\lambda(1+\delta_\lambda)).
\]

Now we prove that for any $\epsilon\in(0,c(\lambda))$,
\begin{equation}\label{5.82}
\inf\limits_{|x|\leqslant (c(\lambda)-\epsilon)t}u(t, x)\rightarrow1~\text{as}~t\rightarrow +\infty.
\end{equation}
By the assumptions of $u_0$ in Theorem \ref{th2.3}, there is a function $v_0(\cdot)\in C(\mathbb R) $ which  is symmetric   and  decreasing on  $\mathbb R^+$ and satisfies that
\begin{equation}\label{5.16}
u_0(x)\geqslant v_0(x)=\left\{
\begin{aligned}
&\gamma e^{-\lambda|x|}, &&|x|\geqslant y_0,\\
&p_1\triangleq\gamma e^{-\lambda y_0}, &&|x|\leqslant y_0,\\
\end{aligned}
\right.
\end{equation}
where $\gamma$ and $y_0$ are two positive constants. Let $v(t,x)$ be the solution of equation \eqref{1.1} with the initial condition $v(0,x)=v_0(x)$.
From Lemma \ref{th3.1} it follows that
\begin{equation}\label{5.33}
u(t,x)\geqslant v(t,x)~~\text{for}~t\geqslant 0,~x\in \mathbb R.
\end{equation}
Theorem  \ref{th2.4} shows that $v(t,\cdot)$ is also symmetric  and decreasing on  $\mathbb R^+$ for any $t>0$.
We denote that $p\triangleq\min\{p_0,p_1\}$ and $\delta\triangleq\min\{\delta_0,\delta_\lambda/2\}$, then $G(c(\lambda),\lambda(1+\delta))>0$.
By $f(\cdot)\in C^{1+\delta_0}\left([0,p_0]\right)$  we can find some constant $M>0$ such that
\begin{equation}\label{5.34}
f(u)\geqslant f'(0)u-Mu^{1+\delta}~~\text{for}~u\in(0,p].
\end{equation}

Construct a lower solution as follows
\[
\underline u(t,x)=\max\left\{0,~g\left(\gamma e^{\lambda(-x+c(\lambda)t)}\right)\right\}~~\text{for}~t\geqslant0,~x\in\mathbb R,
\]
where $g(z)=z-Lz^{1+\delta}$ for $z>0$ and
\begin{equation}\label{5.35}
L\geqslant\max\left\{p^{-\delta},~\gamma^{-\delta}e^{\lambda\delta},~M/G(c(\lambda),\lambda(1+\delta))\right\}.
\end{equation}
Let $z_0$ be the constant satisfying  $z_0^{\delta}=L^{-1}(1+\delta)^{-1}$, then $\omega_0\triangleq g(z_0)\geqslant g(z)$ for all $z>0$ and
\[
\underline u(t,x)\leqslant\omega_0=L^{-\frac{1}{\delta}}\delta(1+\delta)^{-\frac{1+\delta}{\delta}}\leqslant p~~\text{for}~t\geqslant0,~x\in\mathbb R.
\]
From \eqref{5.16} it follows that $v_0(x)\geqslant\underline u(0,x)$ for $x\in\mathbb R$.
Now we verify that $\underline u(t,x)$ is a lower solution of equation \eqref{1.1}.
If $x< c(\lambda)t+\lambda^{-1}(\ln \gamma+\delta^{-1}\ln L)$, it is easy to check that  $\underline u(t,x)=0$ and
\[
\underline u_t(t,x)-k*\underline u(t,x) +\underline u(t,x)- f(\underline u(t,x))\leqslant 0.
\]
If $x\geqslant c(\lambda)t+\lambda^{-1}(\ln \gamma+\delta^{-1}\ln L)$, then $\underline u(t,x)=g\left(\gamma e^{\lambda(-x+c(\lambda)t)}\right)$.  From \eqref{5.34}  it follows that
\[
\begin{aligned}
&\underline u_t(t,x)- k*\underline u(t,x)+\underline u(t,x)- f(\underline u(t,x))\\
\leqslant&~G(c(\lambda),\lambda)\gamma e^{\lambda(-x+c(\lambda)t)}- \big[G(c(\lambda),\lambda(1+\delta))L-M \big]\gamma^{1+\delta} e^{\lambda(1+\delta)(-x+c(\lambda)t)}
\end{aligned}
\]
By $G(c(\lambda),\lambda)=0$ and $L\geqslant M/G(c(\lambda),\lambda(1+\delta))$, we get that $\underline u(t,x)$ is a lower solution.

Lemma \ref{th3.1} implies that
\[
v(t,x)\geqslant\underline u(t,x)~~\text{for}~t\geqslant0,~x\in\mathbb R.
\]
Let $x_0(t)=c(\lambda)t+\lambda^{-1}(\ln\gamma-\ln z_0)\geqslant 1$ with $t\geqslant 0$ and we have that
\[
v(t,x_0(t))\geqslant\underline u(t,x_0(t))=g(z_0)=\omega_0~\text{for}~t\geqslant0.
\]
The symmetry and monotone property of $v(t,\cdot)$  yield that
\[
v(t,x)\geqslant \omega_0~~\text{for}~~t\geqslant 0,~|x|\leqslant x_0(t).
\]
For any  $\omega\in(0,1)$, let $T_{\omega_0}^\omega$ be the  positive constant defined in Lemma \ref{th3.3} and we have
\[
v(t+T_{\omega_0}^\omega,x)\geqslant \omega~~\text{for}~t\geqslant 0,~|x|\leqslant x_0(t),
\]
which implies that
\[
\inf\limits_{|x|\leqslant x_0(t)-c(\lambda)T_{\omega_0}^\omega}v(t,x)\geqslant \omega~~\text{for}~t\geqslant T_{\omega_0}^\omega.
\]
For $\epsilon\in(0,c(\lambda))$, there is a constant $T\geqslant T_{\omega_0}^\omega$ (dependent on $\epsilon$ and $\omega$) such that
\[
\epsilon T\geqslant c(\lambda)T_{\omega_0}^\omega-\lambda^{-1}(\ln\gamma-\ln z_0).
\]
Then we have that $x_0(t)-c(\lambda)T_{\omega_0}^\omega\geqslant(c(\lambda)-\epsilon)t$ and
\[
\inf\limits_{|x|\leqslant (c(\lambda)-\epsilon)t}u(t,x)\geqslant\inf\limits_{|x|\leqslant (c(\lambda)-\epsilon)t}v(t,x)\geqslant \omega~~\text{for}~t\geqslant T,
\]
which completes the proof of \eqref{5.82}.

Finally, it suffices to check that for any $\epsilon>0$,
\begin{equation}\label{5.17}
\sup\limits_{|x|\geqslant (c(\lambda)+\epsilon)t}u(t, x)\rightarrow0~~\text{as}~t\rightarrow +\infty.
\end{equation}
Construct an upper solution as follows
\[
\bar u(t,x)=\min\left\{1,~\Gamma e^{\lambda (-|x|+c(\lambda)t )}\right\}~~\text{for}~t\geqslant0,~x\in\mathbb R.
\]
By the same method  as the step 2 of the proof of Theorem \ref{th2.1}, we  can get \eqref{5.17}.
\end{proof}

Combining   Theorems  \ref{th2.1} and \ref{th2.3}, when $k$ is symmetric, we obtain the relationship between spreading speed and initial data that  decays
exponentially or faster.
If $u_0(x)\sim O(e^{-\lambda |x|})$ as $|x|\rightarrow+\infty$,  then for $\lambda\in[\lambda^*,+\infty)$ the spreading speed equals to $c^*$ and for $\lambda\in(0,\lambda^*)$ the spreading speed $c(\lambda)$ decreases strictly along with the increase of  $\lambda$. Moreover, we have that $c^*=c(\lambda^*)$.
This relationship shows that the  nonlocal dispersal equation with symmetric kernel shares the same property of spreading speed as  the corresponding reaction-diffusion equation.

\section{Case studies}

In this section we show how to calculate  $E(k)$ and apply  Theorem \ref{th1.2} to two examples of dispersal kernels. For more applications to complex systems, refer to our paper \cite{XLR2019}.

\subsection{Normal distribution}
Assume that the dispersal kernel $k$ satisfies
\[
k(x)=\frac{1}{\sqrt{2\pi\sigma}}\exp\left(-\frac{(x-\alpha)^2}{2\sigma}\right),
\]
where $\alpha\in\mathbb R$ is the expectation and $\sigma>0$ is the variance. Define a  constant
\[r=\alpha/\sqrt{2\sigma}.\]
Then some calculations yield that $\text{sign}(r)=\text{sign}(J(k))$ and
\[
\begin{aligned}
E(k)&=\text{sign}(r)\left[1-\inf\limits_{\lambda\in\mathbb R}\left\{\exp\left(\alpha\lambda+\frac{\sigma}{2}\lambda^2\right)\right\}\right]\\
&=\text{sign}(r)\left(1-\exp\left(-r^2\right)\right).
\end{aligned}
\]
The following result is a straightforward consequence of Theorem \ref{th1.2} and we omit its proof.
\begin{corollary}\label{cor1.8}
When $f'(0)\geqslant1$, it holds that $c_l^*<0<c_r^*$  and when $f'(0)<1$,
there exists a constant $r^*>0$ such that

{\rm(i)} if $r>r^*$, then $0<c_l^*<c_r^*${\rm;}

{\rm(ii)} if $r=r^*$, then $0=c_l^*<c_r^*${\rm;}

{\rm(iii)} if $-r^*<r<r^*$, then $c_l^*<0<c_r^*${\rm;}

{\rm(iv)} if $r=-r^*$, then $c_l^*<c_r^*=0${\rm;}

{\rm(v)} if $r<-r^*$, then $c_l^*<c_r^*<0$.
\end{corollary}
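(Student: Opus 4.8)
The plan is to reduce everything to Theorem \ref{th1.2} by translating the hypotheses stated there in terms of $E(k)$ into hypotheses on the single parameter $r=\alpha/\sqrt{2\sigma}$. The starting point is the explicit formula $E(k)=\text{sign}(r)\,(1-e^{-r^{2}})$ already computed above, from which I would first record the elementary observation that $1-e^{-r^{2}}\in[0,1)$ for every $r\in\mathbb R$, so that $E(k)\in(-1,1)$ in all cases.

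For the case $f'(0)\geqslant 1$ this observation alone finishes the argument: since $|E(k)|<1\leqslant f'(0)$, we have $-f'(0)<E(k)<f'(0)$, and Theorem \ref{th1.2}(iii) immediately yields $c_l^{*}<0<c_r^{*}$, which is exactly the first assertion of the corollary.

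For the case $f'(0)<1$ I would define the threshold $r^{*}=\sqrt{-\ln(1-f'(0))}$, which is a well-defined positive number precisely because $1-f'(0)\in(0,1)$. The structural fact to establish is that, as a function of $r$, the quantity $E(k)$ is odd (since $1-e^{-r^{2}}$ is even and is multiplied by $\text{sign}(r)$) and strictly increasing on $\{r>0\}$, the latter because $\tfrac{d}{dr}(1-e^{-r^{2}})=2re^{-r^{2}}>0$ there. Using this strict monotonicity together with the identity $1-e^{-(r^{*})^{2}}=f'(0)$, I would establish the equivalences $E(k)>f'(0)\iff r>r^{*}$ and $E(k)=f'(0)\iff r=r^{*}$; the oddness of $r\mapsto E(k)$ then gives $E(k)=-f'(0)\iff r=-r^{*}$ and $E(k)<-f'(0)\iff r<-r^{*}$, while the remaining band $-f'(0)<E(k)<f'(0)$ corresponds exactly to $-r^{*}<r<r^{*}$. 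Feeding these five equivalences into the five cases of Theorem \ref{th1.2} produces statements (i)--(v) verbatim.

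The argument is essentially a bookkeeping exercise and contains no genuine obstacle; the only points requiring a line of care are verifying that $r^{*}$ is the \emph{unique} positive solution of $1-e^{-r^{2}}=f'(0)$ (which follows from the strict monotonicity of $r\mapsto 1-e^{-r^{2}}$ on $(0,\infty)$) and correctly invoking the sign symmetry so that the left-tail cases map onto the right-tail cases. Since both are routine, the corollary follows directly from Theorem \ref{th1.2}, which is why its proof may be omitted.
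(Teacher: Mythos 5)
Your proof is correct and follows exactly the route the paper intends: the paper omits the proof as a ``straightforward consequence of Theorem \ref{th1.2}'', and your argument is precisely that reduction, translating the five $E(k)$-cases of Theorem \ref{th1.2} into cases on $r$ via the oddness and strict monotonicity of $r\mapsto \mathrm{sign}(r)\bigl(1-e^{-r^{2}}\bigr)$. The explicit threshold $r^{*}=\sqrt{-\ln\bigl(1-f'(0)\bigr)}$ you exhibit is a small bonus beyond what the paper records.
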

\begin{remark}\rm
Since the  dispersal coefficient in equation \eqref{1.1} is $1$, the condition $f'(0)>1$ implies that the reaction term plays a more important role than the dispersal term; on the other hand,  the condition $f'(0)<1$ means that the dispersal term is more important. In the latter case, we show that the asymmetry level of dispersal  determines  the propagation directions.
\end{remark}

\subsection{Uniform distribution.} Suppose that  the kernel $k$  is given by
\[
k(x)=\left\{
\begin{aligned}
&\frac{1}{a-b},&\text{for}~x\in[b,a],\\
&0,&\text{for}~x\notin[b,a],
\end{aligned}
\right.
\]
where   $a\in\mathbb R^+$ and $b\in\mathbb R^-$ stand for the farthest distances
of  organism movements  during a unit time period along and against $x$-axis, respectively.
The  average moving speed is $\int k(x)xdx=(a+b)/2$. Some calculations yield that
\[
E(k)=\text{sign}(a+b)\big[1-\inf\limits_{\lambda\neq0}\left\{h(\lambda)\right\}\big],
\]
where $h(\lambda)=(e^{a\lambda}-e^{b\lambda})/(a\lambda-b\lambda)$ with $\lambda\neq0$.  Next, we define an auxiliary function  and give its  property in the following lemma.
\begin{lemma}\label{lem1.10} Define $\omega(x)=(x-1)e^x$.
Then there is a unique continuous function $z(\cdot)$ from $(0,+\infty)$ to $(-\infty,1)$ with $z(\cdot)\not\equiv0$ such that $\omega(z(\theta))=\omega(-\theta z(\theta))$ for any $\theta>0$. Moreover,  the function $z(\cdot) $ is increasing on $(0,+\infty)$.
\end{lemma}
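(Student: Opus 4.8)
The plan is to reduce the whole statement to the elementary analysis of the scalar function $\omega(x)=(x-1)e^{x}$, whose derivative is $\omega'(x)=xe^{x}$. Hence $\omega$ is strictly decreasing on $(-\infty,0)$ and strictly increasing on $(0,+\infty)$, with global minimum $\omega(0)=-1$, and it satisfies $\omega(x)\to 0^{-}$ as $x\to-\infty$, $\omega(1)=0$, and $\omega(x)\to+\infty$ as $x\to+\infty$. The key structural remark is that for $\theta>0$ and $z\neq 0$ the two arguments $z$ and $-\theta z$ have opposite signs, so the equation $\omega(z)=\omega(-\theta z)$ pairs one point on the decreasing branch with one point on the increasing branch; this is exactly the situation in which $\omega$ can take equal values at two distinct points.

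For each fixed $\theta>0$ I would study the zeros of $F(z)\triangleq\omega(z)-\omega(-\theta z)$ on $(-\infty,1)$, noting that $F(0)=0$ is the trivial root to be discarded. A direct computation gives $F'(z)=z\bigl(e^{z}-\theta^{2}e^{-\theta z}\bigr)$, which vanishes only at $z=0$ and at the single point $z_{*}=\tfrac{2\ln\theta}{1+\theta}$; minimizing $1+\theta-2\ln\theta$ shows $z_{*}<1$ for every $\theta>0$, and clearly $\text{sign}(z_{*})=\text{sign}(\theta-1)$. Reading off the sign of $F'$ on each subinterval then handles the three cases. When $\theta>1$ we have $z_{*}\in(0,1)$, $F$ is negative on $(-\infty,0)\cup(0,z_{*}]$, while $F(1)=(\theta+1)e^{-\theta}>0$, so there is a unique nontrivial zero $z(\theta)\in(z_{*},1)$. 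When $\theta<1$ we have $z_{*}<0$, $F$ is positive on $(z_{*},0)\cup(0,1)$, and since $F(z)\to-\infty$ as $z\to-\infty$ while $F(z_{*})>0$, there is a unique zero $z(\theta)\in(-\infty,z_{*})$. When $\theta=1$, $F$ is strictly increasing, so $z=0$ is its only zero and we set $z(1)=0$. In every case the selected zero is the unique nontrivial one in $(-\infty,1)$, which defines $z(\theta)$ unambiguously, with $z(\theta)<0$ for $\theta<1$ and $z(\theta)>0$ for $\theta>1$.

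For regularity and monotonicity I would invoke the implicit function theorem. On $(z_{*},1)$ (respectively $(-\infty,z_{*})$) one checks that $F'(z(\theta))>0$ strictly, because there $z$ and $e^{z}-\theta^{2}e^{-\theta z}$ share a common sign; thus the root is transversal and $z(\cdot)$ is $C^{1}$ on $(0,1)$ and on $(1,+\infty)$. Differentiating $\omega(z(\theta))=\omega(-\theta z(\theta))$ yields
\[
z'(\theta)=\frac{\theta\,z(\theta)}{e^{(1+\theta)z(\theta)}-\theta^{2}}.
\]
Now $z(\theta)>z_{*}$ (respectively $z(\theta)<z_{*}$) is precisely the statement that the denominator $e^{(1+\theta)z(\theta)}-\theta^{2}$ is positive (respectively negative), so numerator and denominator have the same sign in each case and $z'(\theta)>0$. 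Continuity at the degenerate value $\theta=1$ follows from uniqueness: the roots stay bounded near $\theta=1$ (if $z\to-\infty$ then $\omega(z)\to 0$ while $\omega(-\theta z)\to+\infty$, a contradiction), and any accumulation point $L$ of $z(\theta)$ as $\theta\to1$ must satisfy $\omega(L)=\omega(-L)$, forcing $L=0$; hence $z(\theta)\to 0=z(1)$. Since $z(\theta)<0=z(1)<z(\theta')$ across $\theta=1$ and $z$ is strictly increasing on each side, $z(\cdot)$ is strictly increasing on all of $(0,+\infty)$, and it is not identically zero.

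The routine part is the monotonicity bookkeeping for $F$; the main obstacle I anticipate is organizing the case analysis at the critical point $z_{*}$ cleanly and, above all, establishing continuity and strict monotonicity \emph{through} the degenerate value $\theta=1$, where the selected root passes through $0$ and the derivative formula above takes the indeterminate form $0/0$. The uniqueness-plus-boundedness argument is what lets one bypass a direct computation of $z'(1)$.
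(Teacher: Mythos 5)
Your proof is correct and takes essentially the same approach as the paper: both analyze the zeros of $F(z)=\omega(z)-\omega(-\theta z)$ through its critical points $0$ and $z_*=2\ln\theta/(1+\theta)$, obtain the unique nontrivial root by sign analysis and the intermediate value theorem, and derive strict monotonicity from the implicit function theorem together with the root's position relative to $z_*$ (the paper's formula $z'(\theta)=\frac{z(\theta)(z(\theta)-1)}{(\theta+1)[1-1/\theta-z(\theta)]}$ is just your formula $z'(\theta)=\frac{\theta z(\theta)}{e^{(1+\theta)z(\theta)}-\theta^{2}}$ rewritten using the constraint $\omega(z)=\omega(-\theta z)$). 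The only differences are cosmetic: the paper brackets the root in $(1-1/\theta,1)$ for $\theta>1$ and in $(-1/\theta,1-1/\theta)$ for $\theta<1$ and proves continuity at $\theta=1$ by an integral identity, whereas you bracket relative to $z_*$ and use a boundedness-plus-accumulation-point argument, both of which are valid.
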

\begin{proof}
For any $\theta>0$, define
\[
\bar \omega(\theta,x)=\omega(x)-\omega(-\theta x)=(x-1)e^x+(\theta x+1)e^{-\theta x}~~\text{for}~\theta\in(0,+\infty),~x\in\mathbb R.
\]
It follows that  $\frac{\partial}{\partial x}\bar\omega(\theta,x)=xe^x-\theta^2xe^{-\theta x}$ for $x\in\mathbb R$.
Denote  $x_1=0$ and $x_2(\theta)=2(1+\theta)^{-1}\ln \theta$, then
$\frac{\partial}{\partial x}\bar\omega(\theta,x_1)=\frac{\partial}{\partial x}\bar\omega(\theta,x_2(\theta))=0$.
Some calculations yield that
\begin{equation}\label{6.1}
\bar \omega(\theta,0)=0,~~\bar \omega(\theta,1)>0,~~\bar \omega(\theta,-1/\theta)<0,
\end{equation}
and
\[
\bar\omega\left(\theta,1-1/\theta\right)=e^{1-\theta}(\theta^2-e^{\theta-1/\theta})/{\theta}.
\]
Notice that the function $\theta \mapsto \theta-1/\theta-2\ln \theta$ is strictly increasing on $(0,+\infty)$ and it equals 0 when $\theta=1$. Then we have that
\begin{equation}\label{6.2}
\bar\omega(\theta,1-1/\theta)<0~\text{for}~\theta>1,~\bar\omega(\theta,1-1/\theta)>0~\text{for}~0<\theta<1.
\end{equation}
If $\theta>1$, then $x_1<x_2(\theta)$ and
\begin{equation}\label{6.3}
\frac{\partial}{\partial x}\bar\omega(\theta,x)<0~\text{for}~x\in(x_1,x_2(\theta)),~ \frac{\partial}{\partial x}\bar\omega(\theta,x)>0~\text{for}~x\in\mathbb R \backslash[x_1,x_2(\theta)].
\end{equation}
By \eqref{6.1} and \eqref{6.2}, for any $\theta>1$ there is a unique $z(\theta)\in(1-1/\theta,1)$ such that $\bar\omega(\theta,z(\theta))=0$.
On the other hand, if $0<\theta<1$ then $x_1>x_2(\theta)$ and
\[
\frac{\partial}{\partial x}\bar\omega(\theta,x)<0~\text{for}~x\in(x_2(\theta),x_1),~\frac{\partial}{\partial x}\bar\omega(\theta,x)>0~\text{for}~x\in\mathbb R \backslash[x_2(\theta),x_1].
\]
For any $\theta\in(0,1)$, we can find  a unique $z(\theta)\in(-1/\theta,1-1/\theta)$ such that $\bar\omega(\theta,z(\theta))=0$.
In addition, when $\theta=1$ we define $z(\theta)=0$.  Finally, we show   that
\begin{equation}\label{6.4}
z(1)=0,~z(\theta)\in(1-1/\theta,1)~\text{for}~\theta>1,~z(\theta)\in(-1/\theta,1-1/\theta)~\text{for}~0<\theta<1.
\end{equation}

Now  we prove that  $z(\cdot)$ is continuous on $(0,+\infty)$. Indeed, it suffices to show that \[
\lim\limits_{\theta\rightarrow1^+}z(\theta)=\lim\limits_{\theta\rightarrow1^-}z(\theta)=0.
\]
Notice that
\[
\bar \omega(\theta,z(\theta))-\bar\omega\left(\theta,1-1/\theta\right)=\int_{1-1/\theta}^{z(\theta)}\frac{\partial}{\partial x}\bar\omega(\theta,x)dx,
\]
which means that
\[
-e^{1-\theta}(\theta^2-e^{\theta-1/\theta})/{\theta}=\int_{1-1/\theta}^{z(\theta)}xe^x-\theta^2xe^{-\theta x}dx.
\]
Let  $\theta\rightarrow1^+$ or $1^-$, then
\[
\lim\limits_{\theta\rightarrow1^+}\int_{0}^{z(\theta)}xe^x-xe^{-x}dx=\lim\limits_{\theta\rightarrow1^-}\int_{0}^{z(\theta)}xe^x-xe^{-x}dx=0.
\]
It follows that $\lim_{\theta\rightarrow1^+}z(\theta)=\lim_{\theta\rightarrow1^-}z(\theta)=0$.
Therefore, $z(\cdot)$ is continuous on $(0,+\infty)$.

Next, we prove that $z(\cdot) $ is increasing on $(0,+\infty)$. Consider the function $\bar \omega(\theta,x)$ with $(\theta,x)\in(1,+\infty)\times(0,+\infty)$. For any fixed $\theta_0>1$, it holds that $\bar \omega(\theta_0,z(\theta_0))=0$ and $\frac{\partial}{\partial x}\bar\omega(\theta_0,z(\theta_0))>0$ by \eqref{6.3}. Then implicit function theorem implies  that
$z(\cdot)$ has a continuous derivative at $\theta_0$ and
\[
z'(\theta)=-\frac{\partial \bar\omega(\theta,z(\theta))}{\partial \theta}\left/\frac{\partial \bar\omega(\theta,z(\theta))}{\partial x}\right.
=\frac{\theta z^2(\theta)e^{-\theta z(\theta)}}{z(\theta)e^{z(\theta)}-\theta^2z(\theta)e^{-\theta z(\theta)}}~~\text{for}~ \theta>1.\\
\]
From $\omega(z(\theta))=\omega(-\theta z(\theta))$ it follows that
\begin{equation}\label{6.5}
z'(\theta)=\frac{z(\theta)(z(\theta)-1)}{(\theta+1)[1-1/\theta-z(\theta)]}~~\text{for}~ \theta>1.
\end{equation}
When $\theta>1$,  by $z(\theta)\in(1-1/\theta,1)$, we have that $z'(\theta)>0$. Similarly, we can prove that   $z'(\cdot)$ is continuous on $(0,1)$ and
\[
z'(\theta)=\frac{z(\theta)(z(\theta)-1)}{(\theta+1)[1-1/\theta-z(\theta)]}~\text{ for }~\theta\in(0,1).
\]
Then for $\theta\in(0,1)$, by $z(\theta)\in(-1/\theta,1-1/\theta)$ we obtain that $z'(\theta)>0$.
Therefore, we prove that $z(\cdot)$ is increasing on $(0,+\infty)$. This completes the proof.
\end{proof}

Define a constant
\[\theta\triangleq-a/b\in(0,+\infty).\]
From $h'(\lambda)=(\omega(a\lambda)-\omega(b\lambda))/(a\lambda^2-b\lambda^2)$, it follows that $h'(z(\theta)/b)=0$. Then by $\omega(z(\theta))=\omega(-\theta z(\theta))$, we have that $h(z(\theta)/b)=e^{z(\theta)}/(1+\theta z(\theta))$ and
\[
E(k)=\text{sign}(\theta-1)\Big[1-\frac{e^{z(\theta)}}{1+\theta z(\theta)}\Big].
\]
Denote
\[
r\triangleq(\theta-1)/(\theta+1)=(a+b)/(a-b)\in(-1,1).
\]
\begin{corollary}\label{cor1.11}
All results in Corollary \ref{cor1.8} hold for the uniform distribution case.
\end{corollary}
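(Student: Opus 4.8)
The plan is to reduce everything to the behaviour of $E(k)$ as a function of the single parameter $r\in(-1,1)$ and then to invoke Theorem \ref{th1.2}. By that theorem the five sign patterns of $(c_l^*,c_r^*)$ are decided entirely by how $E(k)$ compares with $\pm f'(0)$, so it suffices to prove that, as a function of $r$, the quantity $E(k)$ is continuous, odd, strictly increasing, and has range exactly $(-1,1)$. Granting this, when $f'(0)<1$ there is a unique $r^*\in(0,1)$ with $E(k)|_{r=r^*}=f'(0)$, oddness gives $E(k)|_{r=-r^*}=-f'(0)$, and strict monotonicity converts each alternative $r\gtrless\pm r^*$ into the corresponding comparison of $E(k)$ with $\pm f'(0)$; feeding these into Theorem \ref{th1.2} reproduces cases (i)--(v). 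When $f'(0)\geqslant1$, I would note that the uniform kernel is genuinely two-sided, so $\Phi(\theta)\triangleq\inf_{\lambda}\int_{\mathbb R}k(x)e^{\lambda x}dx\in(0,1)$ for $\theta\neq1$, whence $|E(k)|<1\leqslant f'(0)$ and only case (iii) of Theorem \ref{th1.2} can occur, i.e. $c_l^*<0<c_r^*$.

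To verify continuity, oddness and range I would work from the closed form $E(k)=\mathrm{sign}(\theta-1)\big[1-\Phi(\theta)\big]$ with $\Phi(\theta)=e^{z(\theta)}/(1+\theta z(\theta))$. Since $r=(\theta-1)/(\theta+1)$ is an increasing bijection of $\theta\in(0,\infty)$ and $z(\cdot)$ is continuous by Lemma \ref{lem1.10}, $E(k)$ is continuous in $r$, with $E(k)=0$ at $\theta=1$ (i.e. $r=0$). Reflecting the kernel sends $\theta\mapsto1/\theta$ and $r\mapsto-r$ while leaving $\inf_{\lambda}\int k\,e^{\lambda x}dx$ unchanged, so $\Phi(1/\theta)=\Phi(\theta)$ and $E(k)$ is odd in $r$ (matching Proposition \ref{pro1.1}(i)). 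For the range I would use $z(\theta)\in(1-1/\theta,1)$ from \eqref{6.4}: as $\theta\to+\infty$ we get $z(\theta)\to1$ and $1+\theta z(\theta)\to+\infty$, so $\Phi(\theta)\to0$ and $E(k)\to1$; oddness then gives $E(k)\to-1$ as $\theta\to0^+$.

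The main obstacle is \emph{strict monotonicity}. By oddness it is enough to show $\Phi$ is strictly decreasing on $(1,\infty)$. I would logarithmically differentiate $\Phi$ and substitute the explicit derivative $z'(\theta)=z(z-1)/\big[(\theta+1)(1-1/\theta-z)\big]$ from \eqref{6.5}. Setting $w=1-z\in(0,1/\theta)$, the sign of $\Phi'$ reduces to that of the quadratic $q(w)=\theta w^2-(\theta+2)w+(1+1/\theta)$, which factors as $\theta\,(w-1/\theta)\big(w-(1+1/\theta)\big)$ and is therefore strictly positive on $(0,1/\theta)$; this forces $\Phi'<0$ for $\theta>1$, as required. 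A softer alternative, which I would keep in reserve, is to exploit that $E(k)$ depends only on the shape of the kernel and to compare two uniform kernels of equal height (with $a-b$ fixed): increasing $\theta$ makes one strictly ``more skewed to the right'' than the other in the sense of Proposition \ref{pro1.1}(ii), yielding monotonicity, with strictness recovered from the strict positivity of the first integral in that proposition's proof once one checks that $\lambda(k)$ varies strictly with $\theta$.

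Putting the four properties together with Theorem \ref{th1.2} then completes the proof. The only genuinely computational point is the quadratic sign analysis establishing monotonicity, and once $q(w)$ is seen to factor, everything else is bookkeeping together with the limits read off from Lemma \ref{lem1.10}.
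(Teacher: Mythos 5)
Your proposal is correct and follows essentially the same route as the paper: both reduce the corollary to strict monotonicity of $E(k)$ in $\theta$ (handling $r<0$ by kernel reflection/oddness and $f'(0)\geqslant 1$ by $|E(k)|<1$), prove that monotonicity by differentiating $\Phi(\theta)=e^{z(\theta)}/(1+\theta z(\theta))$ using the derivative formula \eqref{6.5} together with the bounds \eqref{6.4}, use $q(1)=0$ and $q(\theta)\rightarrow 1$ as $\theta\rightarrow+\infty$ to obtain a unique $\theta^*$ (hence $r^*$), and then invoke Theorem \ref{th1.2}. The only difference is cosmetic: you track the sign of $\Phi'$ through the factored quadratic in $w=1-z$ (where the negative factor $1-1/\theta-z(\theta)<0$ is what converts the positive quadratic into $\Phi'<0$), whereas the paper simplifies $q'(\theta)$ directly to an expression whose positivity is immediate from \eqref{6.4}.
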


\begin{proof} It  suffices to prove the results in the case   $0<f'(0)<1$, since $-1<E(k)<1$. Now we only consider   the case   $r\geqslant0$, namely $\theta\geqslant1$ (otherwise   consider the new spatial variable $y=-x$).
Denote
\[
q(\theta)=1-\frac{e^{z(\theta)}}{1+\theta z(\theta)}.
\]
For $\theta>1$, it follows that
\[
q'(\theta)=\frac{e^{z(\theta)}(\theta-\theta z(\theta)-1)}{[1+\theta z(\theta)]^2}z'(\theta)+\frac{e^{z(\theta)}z(\theta)}{[1+\theta z(\theta)]^2}.
\]
From \eqref{6.5} we get that
\[
q'(\theta)=\frac{e^{z(\theta)}z(\theta)[\theta z(\theta)-\theta+1]}{[1+\theta z(\theta)]^2}~\text{for}~\theta>1.
\]
Then   \eqref{6.4} implies  that  $q'(\theta)>0$ for $\theta>1$, which means that $q(\cdot)$ is strictly increasing on $[1,+\infty)$. Moreover, since $z(\theta)\rightarrow1$ as $\theta\rightarrow+\infty$, we have that
\[
q(1)=0~\text{and}~q(\theta)\rightarrow1~\text{as}~\theta\rightarrow+\infty.
\]
Therefore, when $f'(0)\in(0,1)$, there exists a unique constant $\theta^*>1$ such that $q(\theta^*)=f'(0)$.
Denote $r^*=(\theta^*-1)/(\theta^*+1)$.
Finally, by Theorem \ref{th1.2}, the  monotone property of $q$ completes the proof.
\end{proof}

\section*{Acknowledgments}
\noindent

The authors would like to thank Prof. Chris Cosner (University of Miami), Prof. Jian-Wen Sun (Lanzhou University), Dr. Ru Hou (Peking University) and Dr. Teng-Long Cui (Lanzhou University)  for their helpful comments. Research of W.-B. Xu was partially supported by China Postdoctoral Science Foundation (2019M660047); research of W.-T. Li was partially supported by NSF of China (11731005, 11671180); and research of S. Ruan was partially supported by National Science Foundation (DMS-1853622).

\end{document}